\newtheorem{theorem}{Theorem}[section]
\newtheorem{lemma}[theorem]{Lemma}
\newtheorem{definition}{Definition}[section] 
\newtheorem{question}{Question}[section]
\renewcommand*\env@matrix[1][*\c@MaxMatrixCols c]{%
  \hskip -\arraycolsep
  \let\@ifnextchar\new@ifnextchar
  \array{#1}}
\begin{document}

\begin{frontmatter}



\title{Rational Distance Sets on a Parabola Using Pythagorean Triplets}


\author[inst1]{Sayak Bhattacharjee}
\ead{sayakb@iitk.ac.in}

\affiliation[inst1]{organization={Department of Physics},
            addressline={Indian Institute of Technology Kanpur}, 
            city={Kanpur},
            postcode={208016}, 
            state={Uttar Pradesh},
            country={India}}

\author[inst1]{Divyam Jain}

\begin{abstract}
We study \(N\)-point rational distance sets ($\textrm{\textrm{RDS}}(N)$) on the parabola \(y=x^2\). Previous approaches to the problem include efforts made using elliptic curves and diophantine chains, with successful analysis for $N\leq 4$. We extend the analysis for arbitrary $N$ by establishing a correspondence between $\textrm{\textrm{RDS}}(N)$s and Pythagorean triplets. Our main result gives sufficient and necessary conditions for the existence and nature of the $\textrm{\textrm{RDS}}(N)$s for arbitrary \(N\). Our approach also leads to an efficient computational algorithm to construct new $\textrm{\textrm{RDS}}(N)$s, and we provide multiple new examples of $\textrm{\textrm{RDS}}(N)$s for four and five points. The correspondence with Pythagorean triplets also helps to study the density of the solutions and we reproduce density results for \(N=2\) and \(3\).        
\end{abstract}



\begin{keyword}
rational distance sets \sep parabola \sep Pythagorean triplets \sep Erd\H{o}s-Ulam conjecture
\end{keyword}

\end{frontmatter}


\section{Introduction}\label{sec:intro}
 
We define a rational distance set as follows.

\begin{sloppypar}
 \begin{definition}[Rational distance set]
A rational distance set on the parabola $y=x^2$, denoted by $\textrm{RDS}(N)$, is a set of $N$ points with rational coordinates such that all of the pairwise distances are rational.  
\end{definition}   
\end{sloppypar}

In 2000, Dean asks Campbell \cite{G_Campbell} the following question: Is it possible to find a rational distance set with four non-concyclic points on the parabola \(y = x^2\)? An elementary geometric proof suggests that infinitely many 3-point rational distance sets exist \cite{N_Dean}. Campbell \cite{G_Campbell} extends this to 4, and provides a 5-point example, albeit with 4 concyclic points, using elliptic curve analysis. A parametrization using diophantine chains \cite{A_Choudhry} shows that infinitely many $\textrm{\textrm{RDS}}(4)$s exist on \(y=x^2\), and also provides families of almost perfect solutions for larger \(N\). A natural question then arises, namely,

\begin{question}
If finite, what is the maximum number of points $N$ that can constitute an $\textrm{\textrm{RDS}}(N)$ on \(y = x^2\)? 
\end{question}\label{central_ques}

The first part of this question has been proven affirmatively. In 1960, Ulam and Erd\H{o}s \cite{S_Ulam,P_Erdos} conjectured that there exists no everywhere dense rational distance set in the plane. In 2010, Solymosi and Zeeuw \cite{Solymosi_Zeeuw} prove this (unconditionally) for algebraic curves, showing that no irreducible algebraic curve other than a line or a circle contains an infinite rational distance set. This implies that the maximum $N$ in Question \ref{central_ques} is indeed finite. Recently, conditional proofs of the Erd\H{o}s-Ulam conjecture using the Bombieri-Lang conjecture \cite{J_Shaffaf, T_Tao}, and using the abc conjecture \cite{H_Pasten} have been constructed. It has also been recently conditionally shown that exists a uniform bound on the maximum $N$ that is independent of the actual $\textrm{\textrm{RDS}}(N)$ in question \cite{ascher}. 

Given the above bound, a natural tendency is to attempt to access examples of $\textrm{\textrm{RDS}}(N)$ with large cardinalities. In fact, an $\textrm{\textrm{RDS}}(6)$ is still unidentified on $y=x^2$. Our work in this article suggests a scheme that precisely enables the above. The central result of this article is Theorem \ref{the:poidis}, which establishes that the nature and existence of the $\textrm{\textrm{RDS}}(N)$s for arbitrary $N$, giving explicit expressions for the coordinates of the $\textrm{\textrm{RDS}}(N)$s and the conditions for their existence in terms of ratios of the non-hypotenuse lengths of a Pythagorean triplet, conveniently referred to in this article as a Pythagorean ratio. Clearly, a rational distance set can admit coordinates in both the rationals and irrationals, however, we restrict to rational distance sets constructed via rational points exclusively, without any loss of generality. Throughout the article, the word \textit{`triplet'} is used only to refer to Pythagorean triplets, while the word \textit{`N-tuple'} is used to refer to the $x$-coordinates of an $\textrm{\textrm{RDS}}(N)$; in particular for \(N=3\), the word \textit{`triple'} has been used.



\begin{figure}
\centering
\includegraphics[width=\linewidth/2]{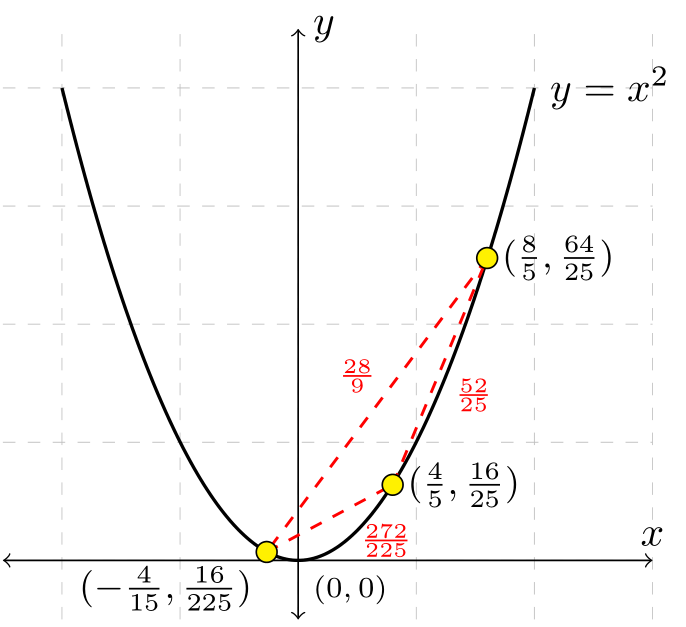}
\caption{ A three-point rational point-distance set on the parabola \(y=x^2\). Here, the solution set \(\mathbf{x_3}=(\begin{smallmatrix}
-\frac{4}{15}&\frac{8}{5}&\frac{4}{5}\end{smallmatrix})^T\) is obtained from the Pythagorean ratio set \(\bm{\psi_3}=(\begin{smallmatrix}
\frac{4}{3}&\frac{8}{15}&\frac{12}{5}\end{smallmatrix})^T\). The red numbers denote the rational distances between each pair of points (yellow circles).}
\label{fig:parabola}
\end{figure}

The rest of the paper is arranged as follows. In Sec.~\ref{prelimiaries}, we discuss the preliminaries required to set up the problem. We provide the main result of this work in Sec.~\ref{main_section}, along with computationally obtained examples. Subsequently, in Sec.~\ref{density_section}, we show an application of our correspondence by demonstrating the density of $N=2$ and $3$ $\textrm
{RDS}(N)$s, with an outlook for future work provided in Sec.~\ref{outlook}. 

\section{Preliminaries}\label{prelimiaries}

We establish a correspondence between the set of Pythagorean triplets and an \textrm{\textrm{RDS}}($N$). We formally define a Pythagorean triplet and the more relevant quantity, a Pythagorean ratio, which we use in our study. 

\begin{definition}\label{Pyth_def}[Pythagorean Triplet, Ratio] A Pythagorean triplet \((\alpha,\beta,\gamma)\) is an ordered 3-tuple such that $\alpha,\beta,\gamma \in \mathbb{Z}\backslash \{0\}$ with \(\alpha^2+\beta^2=\gamma^2\). Without loss of generality, we restrict \(\gamma>0\). Further, a triplet is,
\begin{enumerate}
\item primitive, if \(\alpha,\beta\) and \(\gamma\) are pairwise coprime.
\item all-positive, if \(\alpha\) and \(\beta\) are positive.
\item positive, if \(\alpha\beta>0\). \item negative, if \(\alpha\beta<0\).
\item naturally-ordered, if \(|\alpha|<|\beta|\:(<\gamma)\).
\item oppositely-ordered, if \(|\beta|<|\alpha|\:(<\gamma)\). 
\end{enumerate}

For each Pythagorean triplet \((\alpha,\beta,\gamma)\), we define a Pythagorean ratio, \(\psi:=\frac{\beta}{\alpha}\), as the ratio of the non-hypotenuse lengths of the Pythagorean triplet. Further, we also define a zero Pythagorean ratio, i.e. \(\psi_0:=0 \).
\end{definition}

In line with the parametrizations considered by Campbell and Chowdhury, we observe the following. 

\begin{lemma} \label{sum_psi}
Points \(P_i=(x_i,y_i)\) and \(P_j=(x_j,y_j)\) belong to an \textrm{\textrm{RDS}}$(N)$ if and only if \(x_i+x_j=\psi_{ij}\) where \(\psi_{ij}\) is a Pythagorean ratio chosen apriori.
\end{lemma}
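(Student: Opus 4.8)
The plan is to reduce the rationality of the pairwise distance $d_{ij}$ to a single arithmetic condition on $s := x_i + x_j$, and then to recognize that condition as exactly the statement that $s$ is a Pythagorean ratio. First I would substitute the parabola constraint $y_k = x_k^2$ into the Euclidean distance and factor the vertical displacement as a difference of squares:
\[
d_{ij} = \sqrt{(x_i - x_j)^2 + (x_i^2 - x_j^2)^2} = |x_i - x_j|\,\sqrt{1 + (x_i + x_j)^2}.
\]
Since we work with rational points, $x_i, x_j \in \mathbb{Q}$ and $x_i \neq x_j$ for distinct points, so the prefactor $|x_i - x_j|$ is a nonzero rational. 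Because a nonzero rational multiple of a real number is rational if and only if that number is itself rational, $d_{ij}$ is rational precisely when $\sqrt{1 + s^2} \in \mathbb{Q}$. This is the key reduction; everything that follows is bookkeeping about when $1 + s^2$ is a rational square.

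For the sufficiency direction I would simply assume $s = \psi_{ij} = \beta/\alpha$ for a Pythagorean triplet $(\alpha, \beta, \gamma)$ and compute $1 + \psi_{ij}^2 = (\alpha^2 + \beta^2)/\alpha^2 = \gamma^2/\alpha^2$, whose square root $\gamma/|\alpha|$ is manifestly rational; the degenerate case $s = 0$ is covered by the zero Pythagorean ratio $\psi_0$, for which $\sqrt{1+0}=1$. Hence $d_{ij}$ is rational, which is exactly the pairwise condition required for the two points to coexist in a rational distance set.

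The converse is where the genuine number-theoretic content lies, and it is the step I expect to require the most care. Assuming $\sqrt{1+s^2} \in \mathbb{Q}$, I would write $s = p/q$ in lowest terms, so that $1 + s^2 = (p^2 + q^2)/q^2$. The crucial observation is that this fraction is already reduced: any common prime divisor of $p^2 + q^2$ and $q^2$ would divide $p^2 = (p^2+q^2) - q^2$, contradicting $\gcd(p,q)=1$, whence $\gcd(p^2 + q^2,\, q^2) = 1$. A reduced fraction is a rational square only when its numerator and denominator are separately integer squares; since $q^2$ already is, this forces $p^2 + q^2 = \gamma^2$ for some positive integer $\gamma$. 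Setting $(\alpha, \beta) = (q, p)$ then exhibits $(\alpha, \beta, \gamma)$ as a Pythagorean triplet with $\psi = \beta/\alpha = p/q = s$ (and $s=0$ again matching $\psi_0$), closing the equivalence. The only subtlety worth flagging is this lowest-terms/coprimality reduction, without which one could not conclude that the numerator is individually a perfect square and the correspondence with a Pythagorean triplet would break down.
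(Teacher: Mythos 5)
Your proposal is correct and follows essentially the same route as the paper: the identical factorization $d_{ij}=|x_i-x_j|\sqrt{1+(x_i+x_j)^2}$, the reduction to rationality of $\sqrt{1+s^2}$, and the identification of $s$ with $\beta/\alpha$ for a Pythagorean triplet. The one substantive difference is that the paper dismisses the converse with ``the opposite direction is also easy to show,'' whereas you actually supply it via the lowest-terms argument ($\gcd(p^2+q^2,q^2)=1$ forcing $p^2+q^2$ to be a perfect square), which is exactly the right way to fill that gap.
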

\begin{proof}
The distance between $P_i$ and $P_j$ is given by  $((x_j-x_i)^2+(y_j-y_i)^2)^\frac{1}{2}=|x_j-x_i|(1+(x_i+x_j)^2)^\frac{1}{2}$. Since, \(x_i\) and \(x_j\) are rational, \(|x_j-x_i|\) is rational, and hence, we only need \((1+(x_i+x_j)^2)^\frac{1}{2}\) to be rational for the distance to be rational. Choose a Pythagorean triplet \((\alpha_{ij}, \beta_{ij}, \gamma_{ij})\). Noting that  \(\alpha_{ij}^2+\beta_{ij}^2=\gamma_{ij}^2\), define \((1+(x_i+x_j)^2)^\frac{1}{2}:=\frac{\gamma_{ij}}{\alpha_{ij}}\). We thus obtain:

\begin{equation}
x_i+x_j=\frac{\beta_{ij}}{\alpha_{ij}}=\psi_{ij}   \label{basic} 
\end{equation}

Observe, that the holds trivially when \(\psi_{ij}=\psi_0\). The opposite direction is also easy to show.
\end{proof}

As described previously, our objective is to perform an analysis for general $N$. We thus notice that the number of pairwise distances for an $\textrm{\textrm{RDS}}(N)$ is $N\choose 2$. Thus, we must solve a system of $N\choose 2$ equations of the form of Equation \ref{basic}, to get the $x$-coordinates of the $\textrm{\textrm{RDS}}(N)$. These can be compactly written in the form of a matrix equation. To do so, we need to define a coefficient matrix, which we define as follows. 

\begin{definition}[\(N_{(2)}\)-Indices Set]
A set \(N_{(2)}\)-Indices is the ordered set of all 2-combinations (without repetition) of the first \(N\) natural numbers with elements arranged in lexicographical order. There are \(N\choose 2\) elements in the set.   
\end{definition}

\begin{sloppypar}
As an example, the $4_{(2)}$-Indices set is the ordered set $\{(1,2), (1,3), (1,4), (2,3), (2,4), (3,4)\}$. Such a set may be used to define the coefficient matrix.
\end{sloppypar}

\begin{sloppypar}
\begin{definition} (Coefficient Matrix)
Define a coefficient matrix \(\mathbf{C}_N\), corresponding to the \(N_{(2)}\)-Indices set, of size \({N\choose 2}\times N\) so that if the \(i^{th}\) element of the set is (\(m_i,n_i\)),

\begin{equation*}
(\mathbf{C}_{N})_{ij}:=\begin{cases}
1 & j=m_i,j=n_i\\  0 & j\neq m_i,j\neq n_i\end{cases}
\end{equation*}
\end{definition}
\end{sloppypar}

The system of equation for \(N\) points explicitly is
\begin{equation}
    x_{m_i}+x_{n_i}=\psi_{m_in_i} \hspace{0.5cm} \left(1\leq i\leq{N\choose 2}\right) \label{sys_eq}
\end{equation}
where $(m_i,n_i)$ is the $i^{th}$ element of the $N_{(2)}$-Indices set.  

We next define two column vectors \(\mathbf{x}_{N}\) of size \(N\times1\) and \(\bm{\psi}_N\) of size \({N\choose 2}\times1\) such that \(\mathbf{x}_{N}:=(\begin{smallmatrix}x_1&x_2&\cdots&x_N\end{smallmatrix})^T\) and \(\bm{\psi}_N:=(\begin{smallmatrix}
    \psi_{m_1n_1}&\psi_{m_2n_2}&\cdots&\psi_{m_{N\choose 2}n_{N\choose 2}}\end{smallmatrix})^T\). 
Thus, the system of equations in Equation \ref{sys_eq} is equivalently written in matrix form as
\begin{equation}
    \mathbf{C}_N\mathbf{x}_{N}=\bm{\psi}_N \label{mat_eq}
\end{equation}

Let $\mathbf{A}[a, \hdots, b;]\equiv \mathbf{A}^{[a:b]}$ $(a<b)$ denote the submatrix given by the $a$ to $b$ (both inclusive) rows of the matrix $\mathbf{A}$. Then, we notice that any solution $\mathbf{x}_N$ of Equation \ref{mat_eq} also satisfies 

\begin{equation}
    \mathbf{C}_N^{[1:N]}\mathbf{x}_{N}=\bm{\psi}_N^{[1:N]} \label{mat_eq1}.
\end{equation}

This observation is key to most of the analysis performed in the article, and will be used and discussed in greater detail soon. We first present a few useful results related to the coefficients matrix. 

\begin{lemma} \label{lem:rankofC}
 Rank(\(\mathbf{C}_2)=1\). For \(N\geq3\),  rank(\(\mathbf{C}_N) = N\). 
\end{lemma}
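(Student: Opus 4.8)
The plan is to read the kernel of $\mathbf{C}_N$ directly off its incidence pattern and thereby pin down the column rank. Since $\mathbf{C}_N$ has exactly $N$ columns, we always have $\mathrm{rank}(\mathbf{C}_N)\le N$, so the content of the lemma is the matching lower bound. I would first dispose of $N=2$ by inspection: the $2_{(2)}$-Indices set contains the single pair $(1,2)$, so $\mathbf{C}_2=(\begin{smallmatrix}1&1\end{smallmatrix})$ is a nonzero $1\times 2$ matrix and $\mathrm{rank}(\mathbf{C}_2)=1$. For $N\ge 3$ I would note that $\binom{N}{2}\ge N$, so there are at least as many rows as columns and the full column rank $N$ is attainable; it then suffices to prove that the columns of $\mathbf{C}_N$ are linearly independent.

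For $N\ge 3$ the key step is to show that $\mathbf{C}_N$ has trivial kernel. A vector $\mathbf{v}=(\begin{smallmatrix}v_1&\cdots&v_N\end{smallmatrix})^T$ satisfies $\mathbf{C}_N\mathbf{v}=\mathbf{0}$ exactly when, reading the row indexed by the pair $(m_i,n_i)$, we have $v_{m_i}+v_{n_i}=0$; since the $N_{(2)}$-Indices set ranges over \emph{all} $2$-combinations, this is equivalent to $v_i+v_j=0$ for every $i<j$. I would then exploit the single triangle on the indices $\{1,2,3\}$, which exists precisely because $N\ge 3$: the relations $v_1+v_2=0$ and $v_1+v_3=0$ force $v_2=v_3$, and substituting into $v_2+v_3=0$ gives $2v_2=0$, hence $v_2=v_3=0$ and then $v_1=0$. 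For any remaining index $k$, the relation $v_1+v_k=0$ yields $v_k=0$. Thus $\mathbf{v}=\mathbf{0}$, the columns are independent, and $\mathrm{rank}(\mathbf{C}_N)=N$.

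Two points deserve care. First, the implication $2v_2=0\Rightarrow v_2=0$ uses that we work over $\mathbb{Q}$ (characteristic $0$); this is exactly the feature that separates $N\ge 3$ from $N=2$, since for $N=2$ the absence of any triangle leaves a one-dimensional kernel spanned by $(\begin{smallmatrix}1&-1\end{smallmatrix})^T$. Second, I would record an equivalent route that is better aligned with the sequel, namely showing that the square submatrix $\mathbf{C}_N^{[1:N]}$ appearing in Equation~\ref{mat_eq1} is invertible: its first $N-1$ rows are the pairs $(1,2),\dots,(1,N)$ and its $N$-th row is $(2,3)$, and a short cofactor expansion (clearing column $1$ using the embedded identity block in columns $2,\dots,N$) gives $\det \mathbf{C}_N^{[1:N]}=\pm 2\neq 0$. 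This recovers $\mathrm{rank}(\mathbf{C}_N)=N$ while simultaneously justifying the passage to Equation~\ref{mat_eq1}.

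The main obstacle here is conceptual rather than computational: recognizing that $\mathbf{C}_N$ is the unsigned vertex–edge incidence matrix of the complete graph $K_N$, so that the rank drop at $N=2$ is precisely the bipartite (odd-cycle) phenomenon, with $K_N$ being non-bipartite exactly for $N\ge 3$. Once the triangle relation is isolated, the remaining algebra is immediate and requires no case analysis beyond the $N=2$ boundary already handled.
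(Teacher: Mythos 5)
Your argument is correct, and it takes a genuinely different route from the paper. The paper proves the $N\ge 3$ case by writing out $\mathbf{C}_N$ explicitly and reducing it to row echelon form, then counting the $N$ nonzero rows; it is a direct Gaussian-elimination computation. You instead establish full column rank by showing $\ker\mathbf{C}_N=\{\mathbf{0}\}$: the kernel conditions are $v_i+v_j=0$ for all pairs $i<j$, and the triangle on $\{1,2,3\}$ forces $2v_2=0$, after which everything collapses. Combined with rank--nullity this gives $\mathrm{rank}(\mathbf{C}_N)=N$, and your $N=2$ case (kernel spanned by $(\begin{smallmatrix}1&-1\end{smallmatrix})^T$) matches the paper's inspection of $\mathbf{C}_2=(\begin{smallmatrix}1&1\end{smallmatrix})$. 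What your approach buys is conceptual transparency: it isolates exactly why $N=3$ is the threshold (the first odd cycle in $K_N$, i.e.\ non-bipartiteness of the incidence pattern, together with characteristic $0$ so that $2v_2=0\Rightarrow v_2=0$), and it avoids having to display and verify a row-reduced matrix with placeholder blocks. What the paper's approach buys is that the same echelon computation is reused almost verbatim in the subsequent determinant lemma for $\mathbf{C}_N^{[1:N]}$, which your alternative cofactor route also recovers, so the two presentations end up covering the same ground for the sequel.
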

\begin{proof}
\(\mathbf{C}_2=\left(\begin{smallmatrix} 1 & 1
 \end{smallmatrix}\right)\), and thus, rank(\(\mathbf{C}_2\))=1. For \(N\geq3\), in explicit form, we have \((\mathbf{C}_N)_{{N\choose 2}\times N}:=
\left(\begin{smallmatrix} 1 & 1 & 0 & \cdots & \cdots & \cdots & 0\\ 
1 & 0 & 1 & 0 & \cdots & \cdots & 0\\
 1 & 0 & 0 & 1 & 0 & \cdots & 0\\
 \vphantom{\int^0}\smash[t]{\vdots} & & & \vphantom{\int^0}\smash[t]{\vdots} &  &  & \vphantom{\int^0}\smash[t]{\vdots}\\
 1 & 0 & \cdots & 0 & 0 & 1 & 0\\
 1 & 0 & \cdots & \cdots & 0 & 0 & 1\\
 0 & 1 & 1 & 0 & \cdots & \cdots & 0\\
 \hline
 & & & \mathbf{P}_N &  &  & 
 \end{smallmatrix}\right)\) and \(\mathbf{P}_N\) is a placeholder matrix. We convert it to its row echelon form, to obtain the matrix \( \left(\begin{smallmatrix} 1 & 1 & 0 & \cdots & \cdots & \cdots & 0\\ 
0 & 1 & -1 & 0 & \cdots & \cdots & 0\\
 0 & 0 & 1 & -1 & 0 & \cdots & 0\\
 \vphantom{\int^0}\smash[t]{\vdots} & & & \vphantom{\int^0}\smash[t]{\vdots} &  &  & \vphantom{\int^0}\smash[t]{\vdots}\\
 0 & \cdots & \cdots & 0 & 1 & -1 & 0\\
 0 & \cdots & \cdots & \cdots & 0 & 1 & -1\\
 0 & \cdots & \cdots & \cdots & \cdots & 0 & 1\\
 \hline
 & & & \mathbf{0}_N &  &  & 
 \end{smallmatrix} \right)\), where $\mathbf{0}_N$ is a zero matrix. 
 Clearly, there are \(N\) non-zero rows, and hence rank\((\mathbf{C}_N)=N\).
  \end{proof}

\begin{lemma} 
\label{det}
For \(N\geq 3\), determinant of \(\mathbf{C}_N^{[1:N]}\) is \(2\) for even \(N\) and \(-2\) for odd \(N\).  
\end{lemma}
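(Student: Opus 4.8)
The plan is to compute the determinant directly by cofactor expansion, exploiting the sparse, almost-triangular structure of $\mathbf{C}_N^{[1:N]}$. First I would record exactly which rows appear. Since the $N_{(2)}$-Indices set is lexicographically ordered, its first $N-1$ elements are the pairs $(1,2),(1,3),\dots,(1,N)$ and its $N$-th element is $(2,3)$; this is precisely the top block displayed in the proof of Lemma \ref{lem:rankofC}. Hence $\mathbf{C}_N^{[1:N]}$ is the $N\times N$ matrix whose $i$-th row (for $1\le i\le N-1$) is $\mathbf{e}_1+\mathbf{e}_{i+1}$ and whose last row is $\mathbf{e}_2+\mathbf{e}_3$, where $\mathbf{e}_k$ denotes the $k$-th standard row vector.

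Next I would expand the determinant along the last row, whose only nonzero entries (both equal to $1$) sit in columns $2$ and $3$:
\begin{equation*}
\det\mathbf{C}_N^{[1:N]}=(-1)^{N+2}M_{N,2}+(-1)^{N+3}M_{N,3}=(-1)^N\bigl(M_{N,2}-M_{N,3}\bigr),
\end{equation*}
where $M_{N,2}$ and $M_{N,3}$ are the minors obtained by deleting the last row together with column $2$ and column $3$ respectively. In each minor the surviving rows are $\mathbf{e}_1+\mathbf{e}_{i+1}$ for $1\le i\le N-1$, so both minors are extremely sparse: an all-ones first column together with a single off-column $1$ per row.

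The key computational step is to evaluate these two minors. Deleting column $2$ leaves a matrix that, after the natural relabelling of the surviving columns, is lower triangular with every diagonal entry equal to $1$, so $M_{N,2}=1$. Deleting column $3$ instead kills the second $1$ of the row coming from the pair $(1,3)$, leaving a row with a single nonzero entry; expanding along that row collapses the minor to an identity matrix up to one sign, giving $M_{N,3}=-1$. Substituting these values yields $\det\mathbf{C}_N^{[1:N]}=(-1)^N\bigl(1-(-1)\bigr)=2(-1)^N$, which is $2$ for even $N$ and $-2$ for odd $N$, as claimed.

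I expect the only real obstacle to be bookkeeping rather than ideas: one must track the cofactor signs and the column relabellings carefully, since an off-by-one in the parity would flip the global sign. As an independent check I would note the graph-theoretic reading of the statement: the chosen $N$ rows are the incidence vectors of $N$ edges forming a unicyclic graph on $N$ vertices whose unique cycle is the triangle $1$--$2$--$3$, and the unsigned incidence matrix of a unicyclic graph has nonzero determinant (equal to $\pm2$) exactly when its cycle has odd length. Since a triangle is always odd, $|\det|=2$ for every $N\ge3$, confirming the magnitude and leaving only the sign to the explicit expansion above.
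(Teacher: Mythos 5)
Your proof is correct, and it reaches the value $2(-1)^N$ by a route different from the paper's. The paper converts $\mathbf{C}_N^{[1:N]}$ to upper triangular form and reads the determinant off the diagonal $1,-1,\dots,-1,2$ (with $N-2$ entries equal to $-1$), merely asserting the reduced form without exhibiting the row operations; you instead expand along the last row $\mathbf{e}_2+\mathbf{e}_3$ and evaluate the two minors $M_{N,2}=1$ and $M_{N,3}=-1$ explicitly. I checked both minors: deleting column $2$ does leave a lower-triangular matrix with unit diagonal, and deleting column $3$ isolates the row from the pair $(1,3)$ as $\mathbf{e}_1$, whose cofactor sign $(-1)^{2+1}$ produces the $-1$; the final assembly $(-1)^N\bigl(M_{N,2}-M_{N,3}\bigr)=2(-1)^N$ matches the lemma. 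Your approach is somewhat longer but more self-verifying, since every sign is accounted for rather than deferred to an unstated sequence of row operations; the paper's is more compact. The closing observation, that these $N$ rows form the unsigned incidence matrix of a unicyclic graph whose unique cycle is the odd triangle $1$--$2$--$3$, so that $|\det|=2$ is forced by the standard bipartiteness criterion, is a genuinely useful independent confirmation of the magnitude that the paper does not mention, though it leaves the sign to the explicit computation as you note.
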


\begin{proof}
We convert \(|\mathbf{C}_N^{[1:N]}|\) to upper triangular form, so that we need to evaluate \(\textrm{det}\left(\begin{smallmatrix} 1 & 1 & 0 & \cdots & \cdots & \cdots & 0\\ 
0 & -1 & 1 & 0 & \cdots & \cdots & 0\\
 0 & 0 & -1 & 1 & 0 & \cdots & 0\\
 \vphantom{\int^0}\smash[t]{\vdots} & & & \vphantom{\int^0}\smash[t]{\vdots} &  &  & \vphantom{\int^0}\smash[t]{\vdots}\\
 0 & 0 & \cdots & 0 & -1 & 1 & 0\\
 0 & 0 & \cdots & \cdots & 0 & -1 & 1\\
 0 & 0 & 0 & 0 & \cdots & \cdots & 2\\
\end{smallmatrix}\right)\). Multiplying the diagonal elements we see \(|\mathbf{C}_N^{[1:N]}|=-2\) for odd \(N\), and \(|\mathbf{C}_N^{[1:N]}|=2\) for even \(N\).
\end{proof}

\begin{sloppypar}
Since $\mathbf{C}_N^{[1:N]}$ is not singular, we invert the matrix to obtain

\begin{equation}
\label{inverse_result}
    (\mathbf{C}_N^{[1:N]})^{-1}=\frac{1}{2}\left(
\begin{smallmatrix} 1 & 1 & 0 & \cdots & \cdots & 0 & -1\\ 
1 & -1 & 0 & \cdots & \cdots & 0 & 1\\
 -1 & 1 & 0 & \cdots & \cdots & 0 & 1\\
 -1 & -1 & 2 & 0 & 0 & \cdots & 1\\
  -1 & -1 & 0 & 2 & 0 & \cdots & 1\\
 \vphantom{\int^0}\smash[t]{\vdots} & & & \vphantom{\int^0}\smash[t]{\vdots} &  &  & \vphantom{\int^0}\smash[t]{\vdots}\\
 -1 & -1 & \cdots & \cdots & 2 & 0 & 1\\
 -1 & -1 & 0 & \cdots & \cdots & 2 & 1\\
 \end{smallmatrix}\right). 
\end{equation} 

We are now ready to give the main result of this work. 
\end{sloppypar}
 
\section{Existence of $\textrm{\textrm{RDS}}(N)$s and computational results}\label{main_section}

The central result of this work is given as follows.

\begin{theorem} \label{the:poidis}
 For the parabola, \(y=x^2\), we obtain:
 \begin{enumerate}
     \item infinitely many $\textrm{\textrm{RDS}}(2)$s for each \(\bm{\psi}_2\) obeying the 'Distinct Coordinates' condition.
     \item a unique $\textrm{\textrm{RDS}}(3)$ for each \(\bm{\psi}_3\) obeying the 'Distinct Coordinates' condition.
     \item a unique $\textrm{\textrm{RDS}}(N)$ for each \(\bm{\psi}_N\) that obeys the 'Distinct Coordinates' and 'Existence Condition'; otherwise, no such set exists.
 \end{enumerate}
 
For brevity, let the entries of \(\bm{\psi}_N\) be $\psi_i\;\; (1\leq i\leq {N\choose 2})$. Then, the $x$-coordinates of the $\textrm{\textrm{RDS}}(N)$ is given by:

\begin{equation}
\label{exact_soln}
    x_i=\frac{1}{2}
    \begin{cases}
    (\psi_1+\psi_2-\psi_N) \;\; \textrm{if}\; i=1\\ 
    (\psi_1-\psi_2+\psi_N) \;\; \textrm{if}\; i=2\\
    (-\psi_1+\psi_2+\psi_N) \;\; \textrm{if}\; i=3\\
    (-\psi_1-\psi_2+2\psi_{i-1}+\psi_N) \;\; \textrm{if }\; 4\leq i\leq N.
    \end{cases} 
\end{equation}.

The 'Distinct Coordinates' condition is given by,

\begin{equation}
\label{dist_cond}
    \begin{cases}
    \psi_{1}\neq\psi_{2},\psi_{1}\neq\psi_{N} \; \textrm{or} \;\psi_{2}\neq\psi_{N} \;\; \textrm{if}\; 1\leq i<j\leq 3\\
    \psi_{i-1}\neq\psi_{j-1} \;\; \textrm{if}\; 4\leq i<j\leq N\\ 
    \psi_{1}\neq\psi_{j-1}, \psi_{2}\neq\psi_{j-1} \; \textrm{or} \; \psi_{1}+\psi_2\neq\psi_{j-1}+\psi_N\;\; \textrm{if}\; 1\leq i\leq 3, 4\leq j\leq N.\\
    
    \end{cases}
\end{equation}.

and the 'Existence Condition' is given by,

\begin{equation}
\label{exist_condition}
    \begin{cases}
    \psi_{2}+\psi_{N+i}=\psi_N+\psi_{i+2} \;\; \textrm{if}\; 1\leq i\leq N-3\\
   \psi_{1}+\psi_{N+i}=\psi_N+\psi_{i+5-N} \;\; \textrm{if}\; N-2\leq i\leq 2N-6\\
   \psi_{1}+\psi_2+\psi_{N+i}=\psi_N+\psi_{m_{(i+6-2N)}+2}+\psi_{n_{(i+6-2N)}+2} \;\; \textrm{if}\; 2N-5\leq i\leq {N\choose 2}-N\\
    
    \end{cases}
\end{equation}

where \((m_i,n_i)\) is the $i^{th}$ 2-tuple of the \(N_{(2)}\)-Indices set.  
\end{theorem}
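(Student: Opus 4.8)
The plan is to translate the geometric problem entirely into linear algebra via Lemma~\ref{sum_psi}, and then to read off existence, uniqueness, and the explicit coordinates from the structure of $\mathbf{C}_N$. By Lemma~\ref{sum_psi}, a choice of $\bm{\psi}_N$ with Pythagorean-ratio entries yields an $\textrm{RDS}(N)$ precisely when the linear system $\mathbf{C}_N\mathbf{x}_N=\bm{\psi}_N$ of Equation~\ref{mat_eq} admits a solution whose coordinates are pairwise distinct (distinct $x$-coordinates are necessary and sufficient for distinct points on $y=x^2$). I would therefore organize the proof by the shape of this system, which is governed by Lemma~\ref{lem:rankofC}: for $N=2$ it is a single equation in two unknowns, for $N=3$ it is square, and for $N\geq 4$ it is overdetermined.

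The cases $N=2$ and $N=3$ are immediate. For $N=2$, Lemma~\ref{lem:rankofC} gives $\mathrm{rank}(\mathbf{C}_2)=1$, so $x_1+x_2=\psi_1$ has a one-parameter family of solutions, infinitely many of which satisfy $x_1\neq x_2$; this is the `Distinct Coordinates' requirement. For $N=3$ we have $\binom{3}{2}=3=N$, so $\mathbf{C}_3=\mathbf{C}_3^{[1:3]}$ is square and, by Lemma~\ref{det}, nonsingular; hence there is a unique solution $\mathbf{x}_3=(\mathbf{C}_3^{[1:3]})^{-1}\bm{\psi}_3$, and distinctness of its entries is the only remaining constraint. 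The crux is $N\geq 4$. Here I would invoke Equation~\ref{mat_eq1}: every solution of the full system must solve the top $N$ equations, and since $\mathbf{C}_N^{[1:N]}$ is invertible with inverse given explicitly in Equation~\ref{inverse_result}, those top equations pin down a \emph{unique} candidate $\mathbf{x}_N=(\mathbf{C}_N^{[1:N]})^{-1}\bm{\psi}_N^{[1:N]}$. Carrying out this matrix--vector product row by row against the explicit inverse reproduces exactly the closed form of Equation~\ref{exact_soln}.

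It then remains to characterize when this forced candidate actually solves the full system and has distinct entries. For the `Existence Condition', I would substitute the closed form of Equation~\ref{exact_soln} into each of the leftover rows $N+1,\dots,\binom{N}{2}$ of $\mathbf{C}_N\mathbf{x}_N=\bm{\psi}_N$. The remaining pairs $(m,n)$ of the $N_{(2)}$-Indices set partition naturally into three blocks in lexicographic order --- those with $m=2$ (i.e.\ $(2,4),\dots,(2,N)$), those with $m=3$, and those with $m,n\geq 4$ --- and since $x_2$, $x_3$, and each $x_i$ with $i\geq 4$ have distinct rows in Equation~\ref{exact_soln}, substituting collapses each block to a linear identity among the $\psi$'s. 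These are precisely the three cases of Equation~\ref{exist_condition}, where the index shift $k=i+6-2N$ together with the relabelling $(m,n)\mapsto(m-3,n-3)$ accounts for the third block via $\psi_{m-1}=\psi_{(m-3)+2}$. For the `Distinct Coordinates' condition I would compute the pairwise differences $x_i-x_j$ from Equation~\ref{exact_soln}; each difference telescopes to a simple combination of the $\psi$'s (e.g.\ $x_2-x_3=\psi_1-\psi_2$ and $x_i-x_j=\psi_{i-1}-\psi_{j-1}$ for $4\leq i<j\leq N$), and requiring each to be nonzero yields exactly Equation~\ref{dist_cond}. Conversely, if both conditions hold then the candidate solves every equation and has distinct coordinates, giving a unique $\textrm{RDS}(N)$; if the `Existence Condition' fails, the forced candidate violates some leftover equation, and no $\textrm{RDS}(N)$ with this $\bm{\psi}_N$ can exist.

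The main obstacle I anticipate is not conceptual but combinatorial: correctly tracking the lexicographic position of each leftover pair and matching it to the right case and index in Equation~\ref{exist_condition}. In particular, the third block requires verifying that the pairs $(m,n)$ with $4\leq m<n\leq N$, shifted down by three, enumerate the $2$-combinations of $\{1,\dots,N-3\}$ in the same lexicographic order, so that $\psi_{m-1}+\psi_{n-1}$ can be rewritten in the relabelled form used in the statement. Care is also needed to read the `Distinct Coordinates' clauses as the conjunction of all pairwise-nonequality constraints, and to confirm that on the parabola the coincidence of two $x$-coordinates is the only way distinct indices can fail to give distinct points.
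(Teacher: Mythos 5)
Your proposal is correct and follows essentially the same route as the paper: both solve the invertible top $N\times N$ block $\mathbf{C}_N^{[1:N]}$ to pin down the unique candidate of Equation~\ref{exact_soln}, read off the `Distinct Coordinates' condition from the pairwise differences $x_i-x_j$, and obtain the `Existence Condition' as the consistency requirement on the remaining ${N\choose 2}-N$ rows. The only cosmetic difference is that the paper phrases that consistency check as a rank condition on the augmented matrix $[\mathbf{C}_N|\bm{\psi}_N]$ via the explicit row-reduction matrix $\mathbf{M}_N$, whereas you substitute the candidate back into the leftover equations block by block; the resulting conditions are identical.
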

\begin{sloppypar}
 \begin{proof}
Assuming that an $\textrm{\textrm{RDS}}(N)$ exists, notice that we must have a solution to Equation \ref{mat_eq1}. Since $\mathbf{x}_N=(\mathbf{C}_N^{[1:N]})^{-1}\bm{\psi}_N^{[1:N]}$, using Equation \ref{inverse_result}, we obtain the exact form of the solutions in Equation \ref{exact_soln}. Each of the $x$-coordinates however must be distinct and so we can apply $x_i\neq x_j$ for $1\leq i<j\leq N$ and obtain the 'Distinct Coordinates' condition in Equation \ref{dist_cond}. 

Now we investigate the existence of these solutions. Look at the augmented matrix \([\mathbf{C}_N|\bm{\psi}_N]\). For \(N=2\), we find rank(\([\mathbf{C}_N|\bm{\psi}_N])=\)  rank(\(\mathbf{C}_N)=1<N (=2)\) for any \(\bm{\psi}_2\). Hence, there are infinitely many solutions \(\mathbf{x}_{2}\) to this system for each \(\bm{\psi}_2\) obeying 'Distinct Coordinates'.  

For \(N\geq3\), we claim that rank(\([\mathbf{C}_N|\bm{\psi}_N])\geq\)  rank(\(\mathbf{C}_N)\). Since by Lemma \ref{lem:rankofC}, rank(\(\mathbf{C}_N)=N\), we have (\({N\choose 2}-N\)) zero rows in the row echelon form of \(\mathbf{C}_N\). In the augmented matrix, however, the entries of these (\({N\choose 2}-N\)) rows are linear combinations of the entries of  \(\bm{\psi}_N\) which need not necessarily equal zero. Hence, our claim is true.
For \(N=3\), rank equality occurs and hence a unique solution \(\mathbf{x}_{3}\) is obtained for each \(\bm{\psi}_3\) obeying 'Distinct Coordinates'. However, for \(N>3\), rank(\([\mathbf{C}_N|\bm{\psi}_N])>\)  rank(\(\mathbf{C}_N)\) implies that, in general, the system has no solution. We can circumvent this if we can choose the entries of \(\bm{\psi}_N\), such that the non-zero entries of these rows are set to zero by design. If successful to find such a set of Pythagorean ratios, we obtain rank(\([\mathbf{C}_N|\bm{\psi}_N])=\)  rank(\(\mathbf{C}_N)\) and thus a unique solution \(\mathbf{x}_{N}\). 

Explicitly, this procedure amounts to satisfying the 'Existence Condition'. We consider the equation \(\mathbf{M}_N\mathbf{C}_N=\mathbf{C}^{\textrm{rref}}_N\), where $\mathbf{C}^{\textrm{rref}}_N$ is the coefficient matrix in row reduced echelon form (RREF). Then, $\mathbf{M}_N$ is the product of the elementary row operation matrices to row reduce matrix $\mathbf{C}_N$ and is given by $
\left(
\begin{array}{c|c}
(\mathbf{C}_N^{[1:N]})^{-1} & \mathbf{0}_{{N\choose 2} -N}  \\
\hline
\begin{array}{ccc}
  \mathbf{(01)}_{N-3}   & -\mathbf{I}_{N-3}  & -\mathbf{(1)}_{N-3} \\
     \mathbf{(10)}_{N-3} & -\mathbf{I}_{N-3} & -\mathbf{(1)}_{N-3}\\
     \mathbf{(11)}_{{N-3\choose 2}} & -\mathbf{C}_{N-3} &-\mathbf{(1)}_{{N-3\choose 2}}
\end{array}
 & \mathbf{I}_{{N\choose 2} -N}
\end{array}
\right)$, where \(\mathbf{(01)}_{N-3}\) is the \((N-3)\times 2\) vector with all the entries of the first and second column are 0 and 1 respectively. The $\mathbf{(10)}$ and $\mathbf{(11)}$ blocks below it are defined similarly. \(\mathbf{(1)}_{N-3}\) (resp.  \(\mathbf{(1)}_{{N-3}\choose 2}\)) is the \((N-3)\times 1\) (resp. ${{N-3}\choose 2}\times 1$) vector with all entries equal to 1. Now, observe that the existence condition is equivalent to \(\mathbf{M}_N^{\left[N+1:{N\choose 2}\right]}\bm{\psi}_N=\mathbf{0}\).  This gives us the 'Existence Condition' in Equation \ref{exist_condition}. 

\end{proof}
\end{sloppypar}

The above theorem is very encouraging. It maps the problem of finding $\textrm{RDS}(N)$s on a parabola, to a problem of finding sets of Pythagorean triplets obeying certain properties. This is a useful connection and can be applied to prove results pertaining to such rational distance sets, by using properties of the Pythagorean triplets.

An additional constraint often discussed for $\textrm{RDS}(N)$s is that the set should be in 'General Position', that is, they should be such that no three lie on a line and no four lie on a circle. The first constraint is met automatically on the parabola, and thus, this constraint is equivalent to a condition of non-concyclicity on the parabola. We thus obtain the additional 'General Position' condition given by
\begin{equation}
    \begin{cases}
    \psi_{s-1}+\psi_N\neq 0 \;\; \textrm{if}\; p=1, q=2, r=3, 4\leq s\leq N\\
    \psi_{p-1}+\psi_{q-1}+\psi_{r-1}+\psi_{s-1}\neq2(\psi_{1}+\psi_{2}-\psi_N) \;\; \textrm{if}\; 4\leq p<q<r<s\leq N\\ 
    
    \psi_{q-1}+\psi_{r-1}+\psi_{s-1}\neq\psi_{1}+\psi_2-\psi_N, \psi_{q-1}+\psi_{r-1}+\psi_{s-1}\neq\psi_{1}+2\psi_2-2\psi_N \; \textrm{or} \;\\\hspace{75pt} \psi_{q-1}+\psi_{r-1}+\psi_{s-1}\neq2\psi_{1}+\psi_2-2\psi_N\;\; \textrm{if}\; 1\leq p\leq 3, 4\leq q<r<s\leq N\\
    \psi_{r-1}+\psi_{s-1}\neq\psi_{1}-\psi_N, \psi_{r-1}+\psi_{s-1}\neq\psi_{2}-\psi_N  \; \textrm{or} \; \psi_{r-1}+\psi_{s-1}\neq\psi_1+\psi_{2}-2\psi_N\;\;\\\hspace{200pt} \textrm{if}\; 1\leq p<q\leq 3, 4\leq r<s\leq N\\ 
    \end{cases}
    \label{general_position}
\end{equation}

The explicit form of the solutions, and the accompanying 'Distinct Coordinates' and 'General Position' condition for small $N$ ($\leq 5$) has been tabulated in Table \ref{tab:explicit_soln}.

\begin{table}
\begin{center}
\caption{Explicit solutions for \(2\leq N \leq 5\), and the conditions for 'Distinct Coordinates' and 'General Position' can easily be obtained from Equations ~\eqref{dist_cond} and \eqref{general_position}.}
\label{tab:explicit_soln}
\begin{tabular}{c c } 
 \hline
$N$ & $\mathbf{x}_N$ \\
 \hline
 2 & $\begin{pmatrix}
           r \\
           \psi_{12}-r
         \end{pmatrix}$, $r\in \mathbb{Q}$ \\ 
 3 & $\frac{1}{2}\begin{pmatrix}
           \psi_{12}+\psi_{13}-\psi_{23} \\
           \psi_{12}-\psi_{13}+\psi_{23}\\
           -\psi_{12}+\psi_{13}+\psi_{23}
         \end{pmatrix}$\\
 4 & $\frac{1}{2} \begin{pmatrix}
           \psi_{12}+\psi_{13}-\psi_{14} \\
           \psi_{12}-\psi_{13}+\psi_{14}\\
           -\psi_{12}+\psi_{13}+\psi_{14}\\
           -\psi_{12}-\psi_{13}+2\psi_{23}+\psi_{14}
         \end{pmatrix}$
 \\
 5 &$\frac{1}{2} \begin{pmatrix}
           \psi_{12}+\psi_{13}-\psi_{14} \\
           \psi_{12}-\psi_{13}+\psi_{14}\\
           -\psi_{12}+\psi_{13}+\psi_{14}\\
           -\psi_{12}-\psi_{13}+2\psi_{13}+\psi_{14}\\
             -\psi_{12}-\psi_{13}+2\psi_{13}+\psi_{14}
         \end{pmatrix}$ \\
 \hline
\end{tabular}
\end{center}
\end{table}

Theorem \ref{the:poidis} also provides us a prescription to computationally determine examples of $\textrm{RDS}(N)$s. Given a positive real $\Gamma$ a priori, we can generate Pythagorean triplets with hypotenuse bound by $\Gamma$, and store the corresponding Pythagorean ratios. Let the number of such ratios be $\mathcal{A}(\Gamma)$. Iterating through all possible $N\choose 2$ combinations of the Pythagorean ratios and applying the 'Existence' and 'Distinct Condition' to determine which of them produce valid $\textrm{RDS}(N)$s is clearly computationally expensive. For instance, we know that $\mathcal{A}(\Gamma)\sim\frac{\Gamma}{2\pi}$ and thus, the number of combinations are of the order of ${\frac{\Gamma}{2\pi}+{{\frac{\Gamma}{2\pi}}\choose 2}-1}\choose {{\frac{\Gamma}{2\pi}}\choose 2}$ (since certain repetitions are allowed), that is, increases exponentially in $\Gamma$. Thus, it is paramount that we decrease the number of iterations for searching, and this may be achieved by the following observation. 

\begin{lemma} \label{lem:ind}
 Given that $\bm{\psi}_N$ obeys the 'Existence Condition', $\psi_{N+i}$ is a linear combination of  $\psi_{1}$ to $\psi_{N}$ for $1\leq i\leq {N\choose 2}-N$, and is given explicitly by:
 
 \begin{equation}
 \label{ind_dep}
    \psi_{N+i}=\begin{cases}
    \psi_N+\psi_{i+2}-\psi_{2} \;\; \textrm{if}\; 1\leq i\leq N-3\\
   \psi_N+\psi_{i+5-N}-\psi_{1} \;\; \textrm{if}\; N-2\leq i\leq 2N-6\\
   \psi_N+\psi_{m_{(i+6-2N)}+2}+\psi_{n_{(i+6-2N)}+2}-\psi_{1}-\psi_2 \;\; \textrm{if}\; 2N-5\leq i\leq {N\choose 2}-N\\
    
    \end{cases}
\end{equation}
\end{lemma}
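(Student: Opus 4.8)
The plan is to recognize that this lemma is, at its heart, a direct rearrangement of the 'Existence Condition' (Equation \ref{exist_condition}), combined with a bookkeeping check on the indices. Under the hypothesis that $\bm{\psi}_N$ satisfies the Existence Condition, each of its three branches is an equation that is linear in the single ratio $\psi_{N+i}$, with all remaining terms being other Pythagorean ratios. Thus my first step is simply to isolate $\psi_{N+i}$ on one side of each branch: subtracting $\psi_2$ from the first branch gives $\psi_{N+i}=\psi_N+\psi_{i+2}-\psi_2$, subtracting $\psi_1$ from the second gives $\psi_{N+i}=\psi_N+\psi_{i+5-N}-\psi_1$, and subtracting $\psi_1+\psi_2$ from the third gives the third line of Equation \ref{ind_dep}. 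This reproduces Equation \ref{ind_dep} verbatim. Equivalently, one may observe that these three branches are exactly the rows of the identity $\mathbf{M}_N^{\left[N+1:{N\choose 2}\right]}\bm{\psi}_N=\mathbf{0}$ derived in the proof of Theorem \ref{the:poidis}, rewritten with the dependent ratio placed alone on the left.

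The substantive content is then to verify that the right-hand side of each branch references only the ratios $\psi_1,\ldots,\psi_N$, so that $\psi_{N+i}$ is genuinely a linear combination of the first $N$ ratios. I would check this range by range. In the first branch $1\leq i\leq N-3$, the shifted index $i+2$ runs over $\{3,\ldots,N-1\}$; in the second branch $N-2\leq i\leq 2N-6$, the index $i+5-N$ again runs over $\{3,\ldots,N-1\}$; and in the third branch $2N-5\leq i\leq {N\choose 2}-N$, the shift $i+6-2N$ runs over $\{1,\ldots,{N-3\choose 2}\}$, which is precisely the set of positions of the $(N-3)_{(2)}$-Indices set. Since the entries $(m_k,n_k)$ of that index set satisfy $1\leq m_k<n_k\leq N-3$, the shifted entries $m_k+2$ and $n_k+2$ lie in $\{3,\ldots,N-1\}\subset\{1,\ldots,N\}$. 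In every branch, therefore, the right-hand side involves only $\psi_1,\ldots,\psi_N$, establishing the claimed linear dependence.

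To close the argument I would confirm that the three ranges partition the dependent indices exhaustively and without overlap: their respective lengths $N-3$, $N-3$, and ${N-3\choose 2}$ sum to $2(N-3)+\frac{(N-3)(N-4)}{2}=\frac{N(N-3)}{2}={N\choose 2}-N$, which is exactly the number of ratios $\psi_{N+1},\ldots,\psi_{{N\choose 2}}$. Hence all dependent ratios are accounted for by precisely one branch. The only genuine obstacle is the index bookkeeping in the third branch: one must verify that the affine shift $i\mapsto i+6-2N$ carries the endpoints correctly (the left endpoint to $1$ and the right endpoint to ${N-3\choose 2}$) and that the reindexing against the $(N-3)_{(2)}$-Indices set is consistent with the $\mathbf{(11)}_{{N-3\choose 2}}$ block of $\mathbf{M}_N$. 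Once this alignment is confirmed, the lemma follows immediately.
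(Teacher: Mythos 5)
Your proposal takes essentially the same route as the paper: both treat Equation \ref{ind_dep} as a direct rearrangement of the Existence Condition \eqref{exist_condition} and reduce the lemma to checking that every index on the right-hand side is at most $N$. Your handling of the first two branches matches the paper's, and your verification that the three ranges have lengths $N-3$, $N-3$, ${N-3\choose 2}$ summing to ${N\choose 2}-N$ is a useful completeness check the paper leaves implicit. The one substantive divergence is the third branch. You take $(m_k,n_k)$ to be the $k$-th element of the $(N-3)_{(2)}$-Indices set, which immediately gives $m_k+2,\,n_k+2\in\{3,\dots,N-1\}$; that reading is consistent with the $-\mathbf{C}_{N-3}$ block of $\mathbf{M}_N$, but it is not what the statement literally says: the notation $(m_i,n_i)$ is defined in Theorem \ref{the:poidis} as the $i$-th 2-tuple of the $N_{(2)}$-Indices set, and under that reading one can have $n_k=N$ for some $k\leq{N-3\choose 2}$ once $N\geq 7$, so the shifted index $n_k+2$ can reach $N+2>N$. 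The paper's proof addresses exactly this point: it bounds $n_{(i+6-2N)}+2\leq N+2$ and then notes $N+2\leq N+(2N-6)$ for $N\geq 4$, so any resulting $\psi_{N+j}$ with $j\leq 2N-6$ is itself eliminated by the first two branches, and the claimed expression in terms of $\psi_1,\dots,\psi_N$ follows after one further substitution. If your reading of the index set is the intended one, your argument is complete and cleaner than the paper's; if the stated indexing is taken at face value, your assertion that $n_k+2\leq N-1$ fails for $N\geq 7$ and you need the paper's additional recursive substitution step to close the argument. You should either justify the reinterpretation of the index set explicitly or add that substitution step.
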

\begin{proof}

Observe that Equation \ref{exist_condition} may be rewritten in the form of Equation \ref{ind_dep}. We now need to show the indices on the right hand side are all bounded by $N$. 

We see that $(i+2)\leq (N-1)$ for $i\leq N-3$, $(i+5-N)\leq (N-1)$ for $i\leq 2N-6$, therefore, $\psi_{N+i}$ can be written completely in terms of $\psi_{1}$ to $\psi_{N}$ for $1\leq i\leq 2N-6$. The third case occurs for $N\geq 5$. Now, since $m_i\leq n_i$ in general, we need only consider $n_{i+6-2N}+2\leq n_{{N\choose 2}+6-3N}+2=n_{{{N-3}\choose 2}}+2$ for $N\geq 5$. Now, $\textrm{max}_{1\leq i\leq {N\choose 2}}n_i=N$, and hence $n_{{{N-3}\choose 2}}+2\leq N+2$. Now, $N+2\leq N+(2N-6)$ for $N\geq 4$, and so, we can express $\psi_{N+i}$  for $2N-5\leq i \leq {N\choose 2}-N$ also in terms of $\psi_1$ to $\psi_N$.      
  \end{proof}

The above result provides a fascinating reinterpretation of the problem. A useful Pythagorean ratio vector thus must consist of an independent and dependent part, where \(\bm{\psi}_{N}^{[1:N]}\) is the independent Pythagorean ratio vector, while  \(\bm{\psi}_{N}^{[N+1:{N\choose 2}]}\) is the dependent Pythagorean ratio vector.

\begin{sloppypar}Thus, our algorithm involves constructing an $N$ combination of the Pythagorean ratios and then using Equation \ref{ind_dep} to construct candidate dependant Pythagorean ratios, and then check if these values are indeed valid Pythagorean ratios. If they are, we compute the $x$ coordinates of the corresponding $\textrm{RDS}(N)$ using Equation 5. The 'Distinct Coordinates' condition can be applied computationally either in terms of the $x$ coordinates of the candidate $\textrm{RDS}(N)$, or in terms of the Pythagorean ratios. Iterating through all possible $N$ combinations of the stored Pythagorean ratios for each $\Gamma$, we can determine the corresponding $\textrm{RDS}(N)$s (if any).
\end{sloppypar}

A Python code implementing this algorithm was implemented parallely on a GPU (with upto $\sim25$ GB RAM, accessed via Google Colab). In Table \ref{tab:examples_soln}, we present 10 new examples for $N=3, 4$ and $5$, as an illustration and the corresponding $\bm{\psi}_N$s that are used to construct the $\textrm{RDS}(N)$s using our algorithm. We also enumerate the number of $\textrm{RDS}(N)$s found for $\Gamma\leq 150$ and enumerate them explicitly in Table \ref{tab:number} for $N=3$ and $4$. 



\begin{table}
\begin{center}
\caption{Example (new) solutions $x_N$ for \(3\leq N \leq 5\), along with corresponding Pythagorean ratio vector $\Psi_N$.}
\label{tab:examples_soln}
\begin{tabular}{ccc}
\hline
$N$ & Pythagorean ratio vector ($\Psi_{N}$) & Solution ($\mathbf{x}_N$) \\
\hline
\multirow{5}{*}{3}&& \\
&$\begin{pmatrix}\frac{4}{3}&-\frac{5}{12}&\frac{5}{12}\end{pmatrix}^T$&$\begin{pmatrix}\frac{1}{4} &\frac{13}{12} &-\frac{2}{3}\end{pmatrix}^T$\\
&& \\
&$\begin{pmatrix}-\frac{4}{3} &\frac{12}{5}&-\frac{8}{15}\end{pmatrix}^T$&$\begin{pmatrix}\frac{38}{15} &-\frac{6}{15}&-\frac{2}{15}\end{pmatrix}^T$ \\
&& \\
\hline
\multirow{9}{*}{4}&& \\
&$\begin{pmatrix}-\frac{35}{12} &-\frac{4}{3}&-\frac{7}{24}&-\frac{3}{4}&\frac{7}{24}&\frac{15}{8}\end{pmatrix}^T$&$\begin{pmatrix}-\frac{7}{4},-\frac{7}{6},\frac{5}{12},\frac{35}{24}\end{pmatrix}^T $\\

&& \\
&$\begin{pmatrix}-\frac{208}{105} &-\frac{20}{21}&\frac{208}{105}&-\frac{8}{15}&\frac{15}{5}&\frac{24}{7}\end{pmatrix}^T$&$\begin{pmatrix}-\frac{6}{5}&-\frac{82}{105}&\frac{26}{105}&\frac{334}{105}\end{pmatrix}^T$ \\

&& \\
&$\begin{pmatrix}\frac{95}{168} &\frac{20}{21}&\frac{45}{28}&\frac{65}{168}&\frac{25}{24}&\frac{35}{12}\end{pmatrix}^T$&$\begin{pmatrix}-\frac{5}{28}&\frac{125}{168}&\frac{95}{84}&\frac{25}{14}\end{pmatrix}^T$ \\

&& \\
&$\begin{pmatrix}-\frac{15}{8}&-\frac{21}{20}&-\frac{8}{15}&-\frac{7}{24}&\frac{9}{40}&\frac{21}{20}\end{pmatrix}^T$&$\begin{pmatrix}-\frac{79}{60}&-\frac{67}{120}&\frac{4}{15}&\frac{47}{60}\end{pmatrix}^T$ \\

&& \\
&$\begin{pmatrix}-\frac{779}{660}&-\frac{371}{264}&\frac{7}{24}&\frac{9}{40}&\frac{21}{20}&\frac{56}{33}\end{pmatrix}^T$&$\begin{pmatrix}-\frac{853}{880}&-\frac{557}{2640}&\frac{1151}{2640}&\frac{3329}{2640}\end{pmatrix}^T $\\

&& \\
&$\begin{pmatrix}-\frac{35}{12}&-\frac{21}{20}&-\frac{9}{40}&-\frac{7}{24}&\frac{8}{15}&\frac{12}{5}\end{pmatrix}^T$&$\begin{pmatrix}-\frac{147}{80}&-\frac{259}{240}& \frac{63}{80}, \frac{129}{80}\end{pmatrix}^T$\\

&& \\
&$\begin{pmatrix}-\frac{28}{45}&\frac{11}{60}&\frac{48}{55}&\frac{85}{132}&\frac{4}{3}&\frac{77}{36}\end{pmatrix}^T$& $\begin{pmatrix}-\frac{268}{495}&-\frac{8}{99}&\frac{287}{396}&\frac{140}{99}\end{pmatrix}^T$ \\

&& \\
\hline
\multirow{3}{*}{5}&& \\
&$\begin{pmatrix}0&\frac{7}{24}&\frac{4}{3}&-\frac{3}{4}&-\frac{7}{24}&\frac{3}{4}&-\frac{4}{3}&\frac{25}{24}&-\frac{25}{24}&0\end{pmatrix}^T$&$\begin{pmatrix}\frac{7}{24}&-\frac{7}{24}&0&\frac{25}{24}&-\frac{25}{24}\end{pmatrix}^T$\\
&& \\
\hline

\end{tabular}
\end{center}
\end{table}

\begin{table}
\begin{center}
\caption{Number of solutions $x_N$ generated for all possible valid $\Psi_N$s such that $\gamma \leq \Gamma$ for all Pythagorean ratios in the independent vector $\Psi_N^{[1:N]}$. The total number of solutions $\theta_N^{\textrm{all}}$ and the number of solutions in general position $\theta_N^{\textrm{gp}}$ are enlisted for $3 \leq N\leq 4$. The limits $25\leq \Gamma\leq 145$ are chosen such that we enlist $\Gamma$ whenever there is an increase in the number of solutions.}
\label{tab:number}
\begin{tabular}{ccccc}
\hline
 \multirow{2}{*}{Limit}& \multicolumn{2}{c}{$N=3$} & \multicolumn{2}{c}{$N=4$} \\ 
\cline{2-5} \
 \multirow{2}{*}{($\Gamma)$}&\multirow{2}{*}{$\theta_3^{\textrm{gp}}(\Gamma)$}&\multirow{2}{*}{$\theta_3^{\textrm{all}}(\Gamma)$}&\multirow{2}{*}{$\theta_4^{\textrm{gp}}(\Gamma)$}&\multirow{2}{*}{$\theta_4^{\textrm{all}}(\Gamma)$}\\ 
 & & & & \\
 \hline
25&672&680&16&176\\
29&1320&1330&36&334\\
41&3640&3654&40&883\\
53&5440&5456&88&1328\\
61&7752&7770&108&1893\\
65&14168&14190&148&3459\\
73&18400&18424&180&4504\\
85&29232&29260&228&7159\\
89&35960&35990&256&8826\\
97&43648&43680&288&10704\\
101&52360&52394&316&12855\\
109&62156&62196&392&15302\\
113&73108&73150&420&17999\\
125&85276&85320&432&20972\\
137&98724&98770&500&24321\\
145&129716&129766&544&31941\\
\hline
\end{tabular}
\end{center}
\end{table}

\section{Density of the $\textrm{RDS}(N)$}\label{density_section}

The power of our analysis in Section \ref{prelimiaries} can be realized by its application to density analysis of the $\textrm{RDS}(N)$s on the parabola. Our approach entails taking advantage of the density of the Pythagorean ratios in \(\mathbb{R}\) and then using the linear algebraic correspondence to show the density of $\textrm{RDS}(N)$ for $N=2$ and $3$. For this purpose, we first recall a triplet counting function given by Hinson \cite{E_Hinson}.

\begin{definition}\label{count_func}[Counting Function]
The counting function \(\nu\) for positive naturally-ordered primitive Pythagorean triplets (\(\alpha,\beta,\gamma\)) is such that 
\(\nu(\frac{p}{q}):S\to\{0,1\}\) where \(S=\mathbb{Q}\cap(0,1)\), and \(\nu(\frac{p}{q})=1\) only when there exists a solution \((p,q)\) in \(\mathbb{N}\times\mathbb{N}\) to \(\alpha^2+\beta^2=\gamma^2\), where \(\beta=\alpha+p\) and \(\gamma=\alpha+q\).
\end{definition}

Hinson \cite{E_Hinson} shows that there exists a one-to-one correspondence between the elements of \(\nu^{-1}(1)\) (that is, the rationals \(\frac{p}{q}\)) and the positive naturally-ordered primitive
Pythagorean triplets (\(\alpha,\beta,\gamma\)). Further, he provides the following result:

\begin{lemma}[Hinson] \label{lem:dense triples}
\(\nu^{-1}(1)\) is dense in the real unit interval
\([0,1]\).
\end{lemma}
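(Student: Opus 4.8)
The plan is to reduce the density statement to a one-parameter continuity argument via the classical parametrization of primitive Pythagorean triplets, and then to dispose of the arithmetic side-conditions (coprimality and opposite parity) separately. Recall that every positive, naturally-ordered, primitive triplet \((\alpha,\beta,\gamma)\) arises as \(\{\alpha,\beta\}=\{m^2-n^2,\,2mn\}\), \(\gamma=m^2+n^2\), with \(m>n\geq 1\), \(\gcd(m,n)=1\), and \(m\not\equiv n \pmod 2\). First I would fix the ordering \(\alpha=m^2-n^2<\beta=2mn\) (which alone will suffice) and express the associated point \(\tfrac{p}{q}=\tfrac{\beta-\alpha}{\gamma-\alpha}\) purely in terms of the single ratio \(t=m/n\).

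A direct computation gives \(p=2mn-m^2+n^2\) and \(q=2n^2\), so that
\begin{equation}
\frac{p}{q}=\frac{2t-t^2+1}{2}=1-\frac{(t-1)^2}{2}=:f(t).
\end{equation}
The ordering constraint \(m^2-n^2<2mn\) is equivalent to \(1<t<1+\sqrt{2}\), and on this interval \(f\) is continuous and strictly decreasing with \(f(t)\to 1\) as \(t\to 1^+\) and \(f(t)\to 0\) as \(t\to(1+\sqrt{2})^-\); hence \(f\) maps \((1,1+\sqrt{2})\) continuously onto the open interval \((0,1)\). Thus every admissible \(t\) in this range yields a genuine element of \(\nu^{-1}(1)\), and to prove density it suffices to exhibit a dense set of admissible \(t\).

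The remaining, and really the only delicate, point is that the admissible \(t\) must be ratios \(m/n\) in lowest terms with \(m,n\) of opposite parity, so that mere density of all rationals is not immediately enough. I would resolve this with dyadic ratios: take \(n=2^k\) and \(m\) odd, for which \(\gcd(m,2^k)=1\) and the parity condition \(m\not\equiv n\pmod 2\) hold automatically, guaranteeing primitivity by the classical criterion. The set \(\{\,m/2^k : m\text{ odd},\ k\geq 1\,\}\) is exactly the set of non-integer dyadic rationals, which is dense in \(\mathbb{R}\) and in particular dense in \((1,1+\sqrt{2})\). This produces a dense family of admissible \(t\), each yielding a primitive triplet with the required ordering.

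Finally I would transfer density across \(f\): since \(f\) is continuous and the admissible \(t\) form a dense subset \(D\subseteq(1,1+\sqrt{2})\), the inclusion \(f(\overline{D})\subseteq\overline{f(D)}\) forces \(f(D)\) to be dense in \(f\bigl((1,1+\sqrt{2})\bigr)=(0,1)\). Hence \(\nu^{-1}(1)\) contains a dense subset of \((0,1)\) and is therefore dense in \([0,1]\). The main obstacle is precisely the parity/coprimality bookkeeping; the dyadic choice sidesteps any continued-fraction or Dirichlet-type approximation argument, and the single ordering case \(\alpha<\beta\) already saturates all of \((0,1)\), so the oppositely-ordered case need not be analyzed.
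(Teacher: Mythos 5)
Your argument is correct, and it is worth noting that the paper does not actually prove this lemma at all: it is imported verbatim from Hinson's work as a black box, so you have supplied a self-contained elementary proof where the paper only has a citation. Your computation checks out: with \(\alpha=m^{2}-n^{2}\), \(\beta=2mn\), \(\gamma=m^{2}+n^{2}\) one gets \(p=\beta-\alpha=2mn-m^{2}+n^{2}\), \(q=\gamma-\alpha=2n^{2}\), hence \(p/q=1-\tfrac{(t-1)^{2}}{2}\) with \(t=m/n\), and the natural-ordering constraint \(m^{2}-n^{2}<2mn\) together with \(m>n\) is exactly \(1<t<1+\sqrt{2}\), on which \(f\) is a continuous decreasing bijection onto \((0,1)\); note also \(0<p<q\) so the image really lands in the domain \(S=\mathbb{Q}\cap(0,1)\) of \(\nu\). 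The one genuinely delicate point — that density of \emph{all} rationals \(t\) is not enough because \(m,n\) must be coprime and of opposite parity to guarantee primitivity — is handled cleanly by your dyadic choice \(n=2^{k}\), \(m\) odd, which does produce a dense set of admissible \(t\) in \((1,1+\sqrt{2})\) and hence, by continuity of \(f\), a dense subset of \(\nu^{-1}(1)\) in \((0,1)\) and therefore in \([0,1]\). Two cosmetic remarks: you only need the forward direction of the classical parametrization (that admissible \((m,n)\) yield primitive naturally-ordered triplets), not the completeness claim you open with; and your observation that the single ordering \(\alpha<\beta\) already saturates \((0,1)\) is what makes the argument short — the oppositely-ordered case is indeed irrelevant here, though it reappears later in the paper when the authors extend density from \(\Psi^{+}_{order}\) to all of \(\Psi\).
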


\begin{sloppypar}
We now need the following definitions about the sets of Pythagorean ratios.
\end{sloppypar}

\begin{definition}[Pythagorean Ratio Sets]\label{def:ratio_set}
We define a Pythagorean ratio set \(\Psi\), consisting of all possible Pythagorean ratios. The corresponding elements of these sets will be denoted in small cases. We provide a notation for the subsets of \(\Psi\) as follows. 
\begin{enumerate}
    \item  \(\Psi^+_{order}\): for positive naturally-ordered primitive Pythagorean triplets.
    \item  \(\Psi^+_{opp-order}\): for positive oppositely-ordered primitive Pythagorean triplets.
    \item  \(\Psi^+\): for positive primitive Pythagorean triplets.
    \item \(\Psi^-\): for negative primitive Pythagorean triplets.
\end{enumerate}
\end{definition}

By a successive extension of the density result in Lemma \ref{lem:dense triples} (ordered to positive to general primitive Pythagorean triplets) using elementary results from real analysis, we can show that the set of Pythagorean ratios is dense in the real line. This is detailed in the following theorem.

\begin{theorem}
\label{psi_dense}
The set \(\Psi\) is dense in the real interval \(\mathbb{R}\).
\end{theorem}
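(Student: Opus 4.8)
The plan is to build density on \(\mathbb{R}\) out of Lemma~\ref{lem:dense triples} in three stages that track the subsets of Definition~\ref{def:ratio_set}: first establish density of \(\Psi^+_{order}\), then of \(\Psi^+\), and finally of all of \(\Psi\). The single organising principle is that a continuous, strictly monotone map sends a dense subset of an interval onto a dense subset of the image interval; every extension below is of this homeomorphic type, so each one transports density automatically.

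First I would convert Hinson's statement, which is about the parameters \(\tfrac{p}{q}\), into a statement about ratios. By the one-to-one correspondence recalled after Definition~\ref{count_func}, each \(\tfrac{p}{q}\in\nu^{-1}(1)\) determines a positive naturally-ordered primitive triplet \((\alpha,\beta,\gamma)\) via \(\beta=\alpha+p\), \(\gamma=\alpha+q\). Substituting into \(\alpha^2+\beta^2=\gamma^2\) solves for \(\alpha\), and hence expresses \(\psi=\beta/\alpha\in\Psi^+_{order}\) as an explicit algebraic function of \(s:=\tfrac{p}{q}\). A short computation shows this function is continuous and strictly monotone on \((0,1)\) with boundary values \(1\) and \(+\infty\), so it extends to a homeomorphism \((0,1)\to(1,\infty)\). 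Since \(\nu^{-1}(1)\) is dense in \([0,1]\) by Lemma~\ref{lem:dense triples}, its image \(\Psi^+_{order}\) is dense in \((1,\infty)\).

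Next I would pass from \(\Psi^+_{order}\) to \(\Psi^+\) and then to \(\Psi\). Interchanging the legs carries a naturally-ordered triplet \((\alpha,\beta,\gamma)\) to the oppositely-ordered triplet \((\beta,\alpha,\gamma)\) and the ratio \(\psi\) to \(1/\psi\); as \(x\mapsto 1/x\) is a homeomorphism \((1,\infty)\to(0,1)\), the set \(\Psi^+_{opp-order}\) is dense in \((0,1)\). Thus \(\Psi^+=\Psi^+_{order}\cup\Psi^+_{opp-order}\) is dense in \((1,\infty)\cup(0,1)\), and because the one omitted value \(\psi=1\) is a two-sided limit of \(\Psi^+\) (it is never attained, since \(\alpha=\beta\) would force \(\gamma=\alpha\sqrt2\notin\mathbb{Z}\)), \(\Psi^+\) is in fact dense in \((0,\infty)\). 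Finally, negating one leg sends the positive triplet \((\alpha,\beta,\gamma)\) to the negative triplet \((-\alpha,\beta,\gamma)\), so \(\Psi^-=-\Psi^+\) is dense in \((-\infty,0)\). Combining the two one-sided densities with the adjoined zero ratio \(\psi_0=0\) yields a dense subset of \((-\infty,0)\cup\{0\}\cup(0,\infty)=\mathbb{R}\), which proves the theorem.

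The reciprocal and negation symmetries are manifest homeomorphisms and so are routine. The \emph{main obstacle} is the very first transfer: Lemma~\ref{lem:dense triples} only gives density of the parameters \(\tfrac{p}{q}\), whereas the object of interest is the ratio \(\psi=\beta/\alpha\), so the real content is checking that the algebraic correspondence \(s\mapsto\psi\) is genuinely a homeomorphism onto \((1,\infty)\) (continuity, strict monotonicity, and the correct boundary behaviour) — this is precisely what licenses pushing density forward. A secondary point to treat with care is the gap at \(\psi=1\): although \(1\) is not itself a Pythagorean ratio, it is approached from both sides, so no isolated hole in \((0,\infty)\) survives. I would also note in passing that one can bypass Hinson entirely by using Euclid's parametrisation, since \(t\mapsto \tfrac{2t}{1-t^2}\) is a homeomorphism \((0,1)\to(0,\infty)\) carrying rational \(t\) to genuine positive Pythagorean ratios; but the argument above is preferable here as it reuses the lemma already in place.
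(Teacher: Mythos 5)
Your proposal is correct and follows essentially the same route as the paper: solve for \(\alpha\) in terms of \(p,q\) to express \(\psi=\beta/\alpha\) as a continuous function of \(p/q\), push Hinson's density through that map to get \(\Psi^+_{order}\) dense in \([1,\infty)\), then use the reciprocal and negation symmetries and adjoin \(\psi_0=0\). The only cosmetic difference is that you invoke strict monotonicity (a homeomorphism) and treat the point \(\psi=1\) as a two-sided limit, whereas the paper gets by with continuity plus surjectivity onto \([1,\infty)\) and \((0,1]\); both are valid.
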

\begin{proof}

We first show that \(\Psi^+_{order}\) is dense in the real interval
\([1,\infty)\). Rewrite \(\psi^+_{order}=\frac{\beta}{\alpha}\) (from Definition \ref{count_func}) in terms of \(p\) and \(q\). Solving for \(\alpha\) in \(\alpha^2+(\alpha+p)^2=(\alpha+q)^2\) and noting that \(q>p\), we obtain \(\alpha=q-p+(2q(q-p))^\frac{1}{2}\).  We thus have \(\psi^+_{order}=\frac{1+(2(1-\frac{p}{q}))^\frac{1}{2}}{1-\frac{p}{q}+(2(1-\frac{p}{q}))^\frac{1}{2}}\). Now, the rationals \(\frac{p}{q}\) are elements of the set \(\nu^{-1}(1)\) which by Lemma \ref{lem:dense triples} is dense in \([0,1]\). Notice that the function \(f_1: [0,1]\to[1,\infty)\) given by $f_1(x)=\frac{1+(2(1-x))^\frac{1}{2}}{1-x+(2(1-x))^\frac{1}{2}}$ is continuous and surjective. Hence, we have \(\Psi^+_{order}\) dense in the interval \([1,\infty)\). 

Next, we show that \(\Psi^+\) is dense in \((0,\infty)\). Observe that each element of the set \(\Psi^+_{opp-order}\) is the inverse of the element of the set \(\Psi^+_{order}\). Thus, we have \(\psi^+_{opp-order}=\frac{1}{\psi^+_{order}}\). We see again that \(f_2:[1,\infty)\to(0,1]\) is continuous and surjective, given by $f_2(x)=\frac{1}{x}$ and since \(\Psi_{order}^+\) is dense in \([1,\infty)\), \(\Psi^+_{opp-order}\) is dense in \((0,1]\). Thus \(\Psi^+ = \Psi^+_{order} \; \cup \; \Psi^+_{opp-order}\)  is dense in \((0,1] \cup [1,\infty)\), that is, \((0,\infty)\).  

Finally, we claim that \(\Psi\) is dense in \(\mathbb{R}\). Observe that each element of \(\Psi^-\) is the negative of the elements of the set \(\Psi^+\). We thus have \(\psi^-=-\psi^+\). Thus \(f_3:(0,\infty)\to(-\infty,0)\) given by $f_3(x)=-x$ is continuous and surjective, and since \(\Psi^+\) is dense in \((0,\infty)\), \(\Psi^-\) is dense in \((-\infty,0)\). Thus we have that \(\Psi^+ \cup \Psi^-\) is dense in \(\mathbbm{R}-\{0\}\). Now, we can add the singleton set \(\{\psi_0\}=\{0\}\) to the dense set \(\Psi^+ \cup \Psi^-\) to show that the set \(\Psi\) is dense in \(\mathbb{R}\).  
\end{proof}

Next, we wish to show the density of the $N$-tuples of Pythagorean ratios that we use to construct the $\textrm{RDS}(N)$s. To do so, we need the following result from real analysis. 

 \begin{lemma}
 \label{hyperplane}
Given $L$ to be the set of points on a $N-1$ dimensional hyperplane in $\mathbb{R}^N$, and a set $\mathcal{A}$ dense in $\mathbb{R}^N$, the set  $\mathcal{A}\cap (\mathbb{R}^N\backslash L)$ is dense in $\mathbb{R}^N$.
\end{lemma}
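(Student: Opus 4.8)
The plan is to use the standard topological characterization of density: a subset of $\mathbb{R}^N$ is dense if and only if it meets every nonempty open ball. Accordingly, I would fix an arbitrary point $p \in \mathbb{R}^N$ and an arbitrary radius $\epsilon > 0$, and then exhibit a point of $\mathcal{A} \cap (\mathbb{R}^N \backslash L)$ lying inside the open ball $B(p,\epsilon)$. Proving this for all $p$ and $\epsilon$ establishes the claim.

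The crux of the argument is the observation that a hyperplane is a closed set with empty interior. First I would record that $L$, being an $(N-1)$-dimensional affine subspace, is the zero set of a nonconstant affine functional and hence closed, so that $\mathbb{R}^N \backslash L$ is open. Next, I would argue that $L$ contains no open ball: writing $n$ for a unit normal vector to $L$, the two points $p \pm \tfrac{\epsilon}{2}\,n$ both lie in $B(p,\epsilon)$ but cannot both lie on $L$ (their difference is not orthogonal to $n$), so at least one point of $B(p,\epsilon)$ avoids $L$. Thus $B(p,\epsilon) \not\subseteq L$.

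Combining these two facts, the set $V := B(p,\epsilon) \cap (\mathbb{R}^N \backslash L)$ is an intersection of two open sets, hence open, and it is nonempty by the preceding paragraph. Since $\mathcal{A}$ is dense in $\mathbb{R}^N$, it meets every nonempty open set, so $\mathcal{A} \cap V \neq \emptyset$. Any point $a \in \mathcal{A} \cap V$ then lies in $\mathcal{A} \cap (\mathbb{R}^N \backslash L)$ and satisfies $\|a - p\| < \epsilon$. As $p$ and $\epsilon$ were arbitrary, $\mathcal{A} \cap (\mathbb{R}^N \backslash L)$ is dense in $\mathbb{R}^N$.

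The only genuinely substantive step—and the one I would take care to justify rather than assert—is that the hyperplane is nowhere dense, i.e. that a proper affine subspace cannot contain an open ball; this is precisely what makes $V$ nonempty and hence what guarantees that deleting $L$ does not destroy density. Everything else is a routine unwinding of the $\epsilon$-ball definition of density, so I do not expect any real obstacle beyond being careful with that point.
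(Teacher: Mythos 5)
Your proof is correct and follows essentially the same route as the paper's: both arguments reduce to the fact that any open ball contains a nonempty open subset disjoint from the (closed, nowhere dense) hyperplane $L$, on which the density of $\mathcal{A}$ can then be invoked. Your write-up is in fact slightly more complete, since you justify that $L$ contains no open ball (via the normal-vector perturbation), a point the paper asserts without proof.
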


 \begin{proof}
We first observe that the plane $L$ partitions $\mathbb{R}^N$ to three sets $\mathcal{D}_1$ and $\mathcal{D}_2$ (half spaces) and $\mathcal{D}_3:= L$. Now, clearly $\mathcal{A}\cap (\mathbb{R}^N\backslash L)$ is dense in $\mathbb{R}^N\backslash L$, since $\mathcal{A}$ is dense in $\mathbb{R}^N$. We claim that this also implies that $\mathcal{A}\cap (\mathbb{R}^N\backslash L)$ is dense in $\mathbb{R}^N$. This is because, if we consider any open ball $B\in \mathbb{R}^N$, then there must exist an open ball $B'\in B$ such that $B'\in (\mathbb{R}^N\backslash L)$. Now, since  $\mathcal{A}\cap (\mathbb{R}^N\backslash L)$ is dense in $\mathbb{R}^N\backslash L$, therefore the set $B'\cap \mathcal{A}$ must be non-empty. This also implies that $B\cap \mathcal{A}$ is non-empty for any open ball in $\mathbb{R}^N$. Hence, our claim is true. 
 \end{proof}

We define two restrictions to the set $\bm{\Psi}^N$. Call the set $\Psi^N_{DC}$ to be the set of $N$-tuples of Pythagorean ratios that obey the \textit{Distinct Coordinates} condition for an $\textrm{RDS}(N)$. Also, we call the set $\Psi^N_{DC, EC}$ to be the set of $N$-tuples of Pythagorean ratios that obey the \textit{Distinct Coordinates} and \textit{Existence Condition} for $\textrm{RDS}(N)$.  We show the following result.  
 
 \begin{lemma}
For $N\geq 3$, $\Psi^N_{DC}$ is dense in $\mathbb{R}^N$.
 \end{lemma}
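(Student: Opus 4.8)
The plan is to realize $\Psi^N_{DC}$ as a dense Cartesian-product set from which finitely many hyperplanes have been deleted, and then to invoke Lemma \ref{hyperplane} once for each deleted hyperplane. First I would establish that the unrestricted set of $N$-tuples of Pythagorean ratios, $\Psi^N = \Psi \times \cdots \times \Psi$ ($N$ factors), is itself dense in $\mathbb{R}^N$. This is immediate from Theorem \ref{psi_dense}: given any basic open box $\prod_{k=1}^{N}(a_k,b_k)$, density of $\Psi$ in $\mathbb{R}$ yields some $\psi_k \in \Psi \cap (a_k,b_k)$ for each $k$, so the tuple $(\psi_1,\dots,\psi_N)$ lies in the box; since such boxes form a basis for the topology of $\mathbb{R}^N$, the product $\Psi^N$ is dense.

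Second, I would reinterpret the \emph{Distinct Coordinates} condition geometrically. By Theorem \ref{the:poidis} this condition is exactly the requirement $x_i \neq x_j$ for all $1 \le i < j \le N$, where each $x_k$ is the explicit linear function of the independent ratios $\psi_1,\dots,\psi_N$ given in Equation \ref{exact_soln}. Subtracting the relevant cases of Equation \ref{exact_soln} shows that every difference $x_i - x_j$ reduces to a single nonzero linear functional of $(\psi_1,\dots,\psi_N)$; for instance $x_i - x_j = \psi_{i-1}-\psi_{j-1}$ when $4 \le i < j \le N$, with analogous two- or three-term expressions (such as $\psi_1-\psi_2$, $\psi_1-\psi_N$, $\psi_1-\psi_{j-1}$, or $\psi_1+\psi_2-\psi_{j-1}-\psi_N$) in the remaining cases. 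Consequently each constraint $x_i \neq x_j$ is equivalent to excluding a genuine $(N-1)$-dimensional hyperplane $H_{ij}\subset\mathbb{R}^N$, and therefore $\Psi^N_{DC} = \Psi^N \cap \big(\mathbb{R}^N \setminus \bigcup_{1 \le i<j\le N} H_{ij}\big)$, i.e. the product-dense set $\Psi^N$ with a finite union of $\binom{N}{2}$ hyperplanes removed.

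Third, I would delete these hyperplanes one at a time. Lemma \ref{hyperplane} guarantees that removing a single hyperplane from a set dense in $\mathbb{R}^N$ leaves a set still dense in $\mathbb{R}^N$. Starting from the dense set $\Psi^N$ and applying Lemma \ref{hyperplane} once for each pair $(i,j)$ — an induction on the number of hyperplanes already removed, using the intersection obtained at the previous stage as the new dense set $\mathcal{A}$ and the next $H_{ij}$ as the new $L$ — density is preserved at every step. After the finitely many applications the remaining intersection is precisely $\Psi^N_{DC}$, which is thus dense in $\mathbb{R}^N$.

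The only genuinely non-routine point, and hence the step I expect to need the most care, is the verification in the second paragraph that each difference $x_i-x_j$ is a \emph{nonzero} linear functional: this is what makes every $H_{ij}$ a bona fide $(N-1)$-dimensional hyperplane to which Lemma \ref{hyperplane} applies (and in particular rules out any constraint being vacuous, which would empty $\Psi^N_{DC}$). The accompanying bookkeeping point is that Lemma \ref{hyperplane} is stated for a single hyperplane and must be iterated, which is legitimate precisely because there are only finitely many ($\binom{N}{2}$) coordinate-difference constraints.
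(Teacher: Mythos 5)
Your proposal is correct and follows essentially the same route as the paper: establish density of the product set $\Psi^N$ from Theorem \ref{psi_dense}, identify the Distinct Coordinates condition with the complement of a union of $\binom{N}{2}$ hyperplanes, and iterate Lemma \ref{hyperplane} finitely many times. Your explicit checks that the coordinate differences are nonzero linear functionals and that the product of dense sets is dense are details the paper leaves implicit, but they do not change the argument.
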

 
 \begin{proof}
 From Theorem \ref{psi_dense},  $\Psi^N$ must be dense in $\mathbb{R}^N$. Let an element of  $\Psi^N$ be given by $\left(\psi_1 \; \psi_2 \; \hdots \; \psi_N\right)^T$. Now, consider the set $\mathcal{L}_{DC}\subset \mathbb{R}^N$ given by $\mathcal{L}_{DC}:=\{\mathbf{u}=\left(u_1 \; u_2 \; \hdots \; u_N\right)^T\in  \mathbb{R}^N |  u_{1}= u_{2},u_{1}= u_{N} \; \textrm{and} \;u_{2}= u_{N} \; \textrm{if}\; 1\leq i<j\leq 3; u_{i-1}= u_{j-1} \; \textrm{if}\; 4\leq i<j\leq N;  u_{1}= u_{j-1}, u_{2}= u_{j-1} \; \textrm{and} \; u_{1}+u_2= u_{j-1}+u_N\;\; \textrm{if}\; 1\leq i\leq 3, 4\leq j\leq N\}.$  Notice that $\Psi^N_{DC}=\Psi^N\cap (\mathbb{R}^N\backslash \mathcal{L}_{DC})$ (recall Equation \ref{dist_cond}). Observe that $\mathcal{L}_{DC}$ is the union of $N\choose 2$ hyperplanes in $\mathbb{R}^N$. Thus, by repeated application of Lemma \ref{hyperplane}, we conclude $\Psi^N_{DC}$ is dense in $\mathbb{R}^N$.
\end{proof} 

It has not been possible to show a density result for $\Psi_{DC,EC}$ yet, for general $N$. We thus pose the following open question.

\begin{question}
\label{density_of_pyth}
For $N\geq 4$, if there are infinitely many $\textrm{RDS}(N)$s, is $\Psi_{DC,EC}^N$ dense in $\mathbb{R}^N$? In particular, is $\Psi_{DC,EC}^4$ dense in $\mathbb{R}^4$?
\end{question}

Discussion on the above question is done in Section 5. Nevertheless, it is possible to make progress for $N=2$ and 3, since in these cases, $\Psi_{DC,EC}^N=\Psi_{DC}^N$. Defining a function $C_N^{[1:N]}$ corresponding to the restricted coefficient matrix $\mathbf{C}_N^{[1:N]}$ (recall Equation \ref{mat_eq1}), we can provide the following result.

\begin{lemma} 
For $N=2$, $C_2:\mathbb{R}^2\rightarrow \mathbb{R}$ is an open map. For $N\geq 3$, the map \(C_N^{[1:N]}:\mathbb{R}^N\rightarrow\mathbb{R}^N\) is an open map. 
\end{lemma}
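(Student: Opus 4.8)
The plan is to recognize both claims as instances of a single elementary fact: a surjective linear map between finite-dimensional Euclidean spaces is open. The argument then splits according to whether the underlying matrix is square and invertible (the generic case $N\geq 3$) or rectangular (the degenerate case $N=2$), and only the latter requires any genuine work.

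First I would dispose of $N\geq 3$. By Lemma \ref{det}, $\det \mathbf{C}_N^{[1:N]}=\pm 2\neq 0$, so the associated linear map $C_N^{[1:N]}:\mathbb{R}^N\to\mathbb{R}^N$ is a bijection. A bijective linear map between finite-dimensional spaces has a linear, hence continuous, inverse, so it is a homeomorphism and in particular an open map; no estimate is needed in this case.

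The substantive step — and the main obstacle — is the map $C_2(x_1,x_2)=x_1+x_2$, read off from $\mathbf{C}_2=(\begin{smallmatrix}1&1\end{smallmatrix})$, which is surjective onto $\mathbb{R}$ but \emph{not} injective, so the homeomorphism argument is unavailable. Here I would prove the openness of an arbitrary linear surjection $T:\mathbb{R}^n\to\mathbb{R}^m$ directly. Given an open set $U$ and a point $x_0\in U$, choose $\epsilon>0$ with $B(x_0,\epsilon)\subseteq U$ and, using surjectivity, fix vectors $v_1,\dots,v_m$ with $T(v_i)=e_i$. Since $y\mapsto\sum_i y_i v_i$ is a bounded linear map, say $\|\sum_i y_i v_i\|\leq M\|y\|$, every $y$ with $\|y-T(x_0)\|<\epsilon/M$ equals $T(x_0+\sum_i(y-T(x_0))_i v_i)$ with $x_0+\sum_i(y-T(x_0))_i v_i\in B(x_0,\epsilon)\subseteq U$; hence $T(U)\supseteq B(T(x_0),\epsilon/M)$ and $T(U)$ is open.

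Specializing $T=C_2$ then finishes the argument — concretely, $x_1+x_2$ sweeps out the open interval $(a_1+a_2-r\sqrt 2,\,a_1+a_2+r\sqrt 2)$ as $(x_1,x_2)$ ranges over the open ball of radius $r$ about $(a_1,a_2)$, confirming directly that images of balls are open. The whole proof is soft, using nothing beyond Lemma \ref{det} and the finite-dimensional open-mapping principle; the only point demanding care is resisting the temptation to treat $N=2$ like the invertible case, since there the map collapses a dimension and openness must be argued through the preimage-of-basis bound rather than through an inverse.
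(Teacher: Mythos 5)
Your proposal is correct and takes essentially the same route as the paper: for $N\geq 3$ both arguments reduce to the nonsingularity of $\mathbf{C}_N^{[1:N]}$ from Lemma \ref{det} (the paper invokes the inverse function theorem where you simply note that a linear bijection is a homeomorphism), and for $N=2$ both establish openness by constructing explicit preimages so that the image of a ball of radius $\epsilon$ contains an interval of radius $\epsilon\sqrt{2}$ about the image point. Your general surjective-linear-map formulation is a mild repackaging of the same computation, not a different method.
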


\begin{proof}
We do the $N=2$ case using first principles. Consider a point \(X=(u_1,u_2)\) in \(\mathbb{R}^2\). Construct an open ball \(U\) centered at \(X\) of radius \(\epsilon\). Any arbitrary point \(X'\) lying in \(U\) can be written as \((u_1+\epsilon_1,u_2+\epsilon_2)\), where \(\sqrt{\epsilon_1^2+\epsilon_2^2}<\epsilon\). Now, call \(C_2(X)\) as \(Y\) in \(\mathbb{R}\). Thus, \(Y=(u_1+u_2)=y\) (say). Now call \(C_2(X')\) as \(Y'\) and see that \(Y'=(u_1+u_2+\epsilon_1+\epsilon_2)\). Thus  \(Y'\) lies inside an open interval \(V\) of radius \(\epsilon\sqrt{2}\) in \(\mathbb{R}\), for any \(X'\) in \(U\). Thus, we have shown that all points in the open ball \(U\) can be mapped to the inside of an appropriately sized open interval \(V\) in \(\mathbb{R}\). 

We now show that this \(V\) is indeed \(C_2(U)\). Choose an arbitrary point \(Z'=(y+\delta)\) at a distance \(\delta\) from \(Y\) inside \(V\). Thus \(\delta<\epsilon\sqrt{2}\). We show that \(Z'\) is the image of a point \(Z=(u_1+\omega_1,u_2+\omega_2)\) that lies in \(U\). For this to be true, we must have \(\omega_1+\omega_2=\delta\), which implies that we have to choose \(\omega_1\) and \(\omega_2\) such that \(\sqrt{\omega_1^2+\omega_2^2}<\epsilon\). This can be obtained by choosing \(\omega_1=\omega_2=\frac{\delta}{2}\). Thus for \(N=2\), \(V\) is indeed \(C_2(U)\). 

Now, since any arbitrary open set is a union of open balls, we have shown that \(C_2\) maps open sets in \(\mathbb{R}^2\) to open sets in \(\mathbb{R}\). Thus, \(C_2\) is an open map.

For $N\geq 3$, the map \(C_N^{[1:N]}\) maps \((x_1,x_2,\hdots,x_{N-1},x_N)\rightarrow (x_1+x_2, x_1+x_3, \hdots, x_1+x_N, x_2+x_3)\). The Jacobian of this map is \(\mathbf{C}_N^{[1:N]}\). Using Lemma \ref{det} and by an application of the inverse function theorem, we obtain that \(C_N^{[1:N]}\) is an open map. 
\end{proof}

We thus give the density results for $N=2$ and 3. Let the call \(\mathcal{R}_N\) as the set of all \(N\)-tuples each of which are the \(x\)-coordinates of an $\textrm{RDS}(N)$. Then, showing \(\mathcal{R}_N\) dense in \(\mathbb{R}^N\) is equivalent to showing that the $\textrm{RDS}(N)$ is dense in the set of points of the parabola.

\begin{theorem}
  For $N=2$ and $3$, \(\mathcal{R}_N\) is dense in \(\mathbb{R}^N\). 
\end{theorem}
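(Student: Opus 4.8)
The plan is to exploit the linear correspondence $\mathbf{C}_N^{[1:N]}\mathbf{x}_N = \bm{\psi}_N^{[1:N]}$ of Equation \ref{mat_eq1} together with the fact that, for $N=2,3$, the existence condition is vacuous so that $\Psi^N_{DC,EC}=\Psi^N_{DC}$. Concretely, a tuple $\mathbf{x}_N$ lies in $\mathcal{R}_N$ precisely when its image under $C_N^{[1:N]}$ is a vector of Pythagorean ratios satisfying the `Distinct Coordinates' condition. I would therefore transfer the known density of the Pythagorean-ratio sets (Theorem \ref{psi_dense}, and its $N$-tuple refinement) across the map $C_N^{[1:N]}$, using the open-map property established just above.

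For $N=3$, the matrix $\mathbf{C}_3^{[1:3]}$ is invertible by Lemma \ref{det} (determinant $\pm 2$), so $C_3^{[1:3]}$ is a linear isomorphism of $\mathbb{R}^3$ and hence a homeomorphism. By Theorem \ref{the:poidis}, each $\bm{\psi}_3\in\Psi^3_{DC}$ yields a unique $\textrm{RDS}(3)$ via $\mathbf{x}_3=(\mathbf{C}_3^{[1:3]})^{-1}\bm{\psi}_3$, whence $\mathcal{R}_3=(C_3^{[1:3]})^{-1}(\Psi^3_{DC})$. Since $\Psi^3_{DC}$ is dense in $\mathbb{R}^3$ by the earlier lemma, and the preimage of a dense set under a homeomorphism is dense, I conclude $\mathcal{R}_3$ is dense. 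The transfer is purely topological: for any open $U\subseteq\mathbb{R}^3$, openness of $C_3^{[1:3]}$ makes $C_3^{[1:3]}(U)$ open and nonempty, so it meets $\Psi^3_{DC}$, and pulling such a point back lands in $U\cap\mathcal{R}_3$.

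For $N=2$ the map $C_2:\mathbb{R}^2\to\mathbb{R}$ drops dimension and is not injective, so I would argue directly from the open-map property. Here $\mathcal{R}_2=\{(x_1,x_2):x_1+x_2\in\Psi,\ x_1\neq x_2\}$. Given any open ball $B\subset\mathbb{R}^2$, openness of $C_2$ makes $C_2(B)$ an open subset of $\mathbb{R}$, and density of $\Psi$ (Theorem \ref{psi_dense}) supplies some $\psi\in\Psi\cap C_2(B)$. The fiber $\{(x_1,x_2)\in B:x_1+x_2=\psi\}$ is then a nonempty open segment, and the diagonal $\{x_1=x_2\}$ meets the line $x_1+x_2=\psi$ in a single point; deleting it leaves a nonempty set of points of $B$ satisfying $x_1\neq x_2$, hence lying in $\mathcal{R}_2$. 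Thus $B\cap\mathcal{R}_2\neq\emptyset$, giving density.

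The main obstacle is the $N=2$ case: because $C_2$ is not injective I cannot simply invoke ``preimage of a dense set is dense,'' and must instead control the one-dimensional fiber over a chosen $\psi$ and check that the constraint $x_1\neq x_2$ excises only a single point of it, so that a valid preimage survives inside $B$. For $N=3$ the argument is essentially formal once invertibility of $\mathbf{C}_3^{[1:3]}$ and density of $\Psi^3_{DC}$ are in hand; the only point requiring care there is confirming that $\mathcal{R}_3$ coincides exactly with $(C_3^{[1:3]})^{-1}(\Psi^3_{DC})$, i.e.\ that for $N=3$ the existence condition imposes no further restriction beyond `Distinct Coordinates'.
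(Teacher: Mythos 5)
Your proposal is correct and follows essentially the same route as the paper: transfer the density of the Pythagorean-ratio sets through the open map $C_N^{[1:N]}$, using that the existence condition is vacuous for $N\leq 3$ so that only the `Distinct Coordinates' restriction matters. In fact your $N=2$ argument (picking $\psi\in\Psi\cap C_2(B)$ and deleting the single diagonal point from the fiber inside $B$) spells out details that the paper's own two-line treatment of that case leaves implicit.
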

\begin{proof}

For \(N=2\), we have $\mathbf{C}_2\mathbf{x}_2=\bm{\psi}_2$. Now, since $\bm{\psi}_2\in \Psi$ and \(\Psi\) is dense in \(\mathbb{R}\), we obtain \(\mathcal{R}_2\) to be dense in \(\mathbb{R}^2\).

For $N=3$, consider the function $C_3:\mathbb{R}^3 \rightarrow \mathbb{R}^3$ corresponding to the matrix $\mathbf{C}_3$, and observe that $C_3(\mathbf{x})=C_3^{[1:3]}(\mathbf{x})$ is an open map. Since $\Psi_{DC}^N$ is dense in $\mathbb{R}^3$, by Lemma 1.5 we obtain $\mathcal{R}_3$ to be dense in $\mathbb{R}^3$.
\end{proof}

It is easy to show that the density of $\mathcal{R}_N$ in $\mathbb{R}^N$ depends on whether $\Psi_{DC,EC}^N$ is dense in $\mathbb{R}^N$, since $C_N^{[1:N]}$ is an open map. Thus, the answer to Question \ref{density_of_pyth} will also answer the following open problem. 

\begin{question}
For $N\geq 4$, if there are infinitely many $\textrm{RDS}(N)s$, is $\mathcal{R}_N$ dense in $\mathbb{R}^N$? In particular, is $\mathcal{R}_4$ dense in $\mathbb{R}^4$?
\end{question}

\section{Outlook} \label{outlook}


In this article, we study rational distance sets on the parabola \(y=x^2\). Using a correspondence between the solutions sets of the $\textrm{RDS}(N)$ and the set of primitive Pythagorean triples, we are able to provide a result that gives these solutions in terms of so-called Pythagorean ratios. The existence of an $\textrm{RDS}(N)$ is contingent on the Pythagorean ratios obeying certain properties, which we call the 'Distinct Coordinates' condition. Using this, we are able to give an efficient algorithm that helps to search for new examples of such rational distance sets, and we enlist 10 new examples, including new examples for a 5-point $RDS$, of which only one was known in the previous literature. Furthermore, we demonstrate another use of our formulation in analysis, by showing the density of the  2 and 3-point $\textrm{RDS}$s using a density result known for Pythagorean triplets. 

The extensions of this work can be done in a  threefold manner. One, we can investigate the conditions on the Pythagorean triplets ('Distinct Coordinates' condition), in terms of solving Diophantine equations and see if this structure helps to reveal properties of such rational distance sets. Two, we can perform higher numerics and begin the search for a $6$ point RDS, which will give the first example of such a set. Three, we can attempt to answer Questions 4.1 and 4.2, on the density of rational distance sets with four points. We suspect this will be closely related to the properties of the Pythagorean triplets, and hence, we foresee direction one to be the most promising for future research in this problem. 

\section*{Acknowledgement}
We gratefully acknowledge Ashwin Girish, Ayush Basu and Rachit Bodhare for insightful discussions, and Arkavo Hait and Nallapati Sathvik for helping with writing the parallelized code. We also thank Prof. Santosha Pattanayak, Prof. Santosh Nadimpalli and Prof. Arijit Ganguly for useful feedback on the paper. Initial results were presented in the undergraduate poster session at the AMS Joint Mathematical Meeting 2021, and SB acknowledges the valuable insights provided by the referees. Part of this work was conducted as an IITK Stamatics summer project mentored by SB, who is grateful to the Stamatics team for the opportunity.

 \bibliographystyle{elsarticle-num} 
 \bibliography{cas-refs}





\end{document}